\documentclass[11pt]{article}

\usepackage{comment}
\usepackage{fullpage}
\usepackage{amsmath,amsthm,amssymb}
\usepackage{mathrsfs,nicefrac}
\usepackage{amssymb}
\usepackage{epsfig}
\usepackage[all]{xy}
\usepackage{tocloft}
\usepackage{cancel}
\usepackage[strict]{changepage}
\ifx\dontloadhyperref\undefined
\usepackage[pdftex]{hyperref}
\usepackage{relsize}
\usepackage{color}
\usepackage{pdflscape}
\usepackage{graphicx}
\usepackage{stmaryrd}
\usepackage{cite} 

\usepackage{pgf,tikz}
\usetikzlibrary{arrows}

\else

\fi


\ifx\dontloaddefinitionsoftheoremenvironments\undefined
\theoremstyle{plain}
\newtheorem{thm}[equation]{Theorem}
\newtheorem*{thm*}{Theorem}
\newtheorem{lem}[equation]{Lemma}
\newtheorem*{lem*}{Lemma}
\newtheorem{prop}[equation]{Proposition}
\newtheorem*{prop*}{Proposition}
\newtheorem{cor}[equation]{Corollary}
\newtheorem*{cor*}{Corollary}

\theoremstyle{definition}

\newtheorem*{defn*}{Definition}
\else\relax\fi

\DeclareSymbolFont{AMSb}{U}{msb}{m}{n}
\DeclareMathSymbol{\N}{\mathbin}{AMSb}{"4E}
\DeclareMathSymbol{\Z}{\mathbin}{AMSb}{"5A}
\DeclareMathSymbol{\R}{\mathbin}{AMSb}{"52}
\DeclareMathSymbol{\Q}{\mathbin}{AMSb}{"51}
\DeclareMathSymbol{\I}{\mathbin}{AMSb}{"49}
\DeclareMathSymbol{\C}{\mathbin}{AMSb}{"43}
\DeclareMathSymbol{\A}{\mathbin}{AMSb}{"41}
\DeclareMathSymbol{\F}{\mathbin}{AMSb}{"46}


\renewcommand{\ker}{\textup{ker}\,}

\DeclareMathOperator{\im}{im}

\newcommand{\Squ}{\textup{Sq}}


\renewcommand{\mapsto}{\longmapsto}


\let\phi\varphi
\renewcommand{\to}{\longrightarrow}


\newcommand{\Addresses}{{
  \bigskip
  \footnotesize

 \textsc{Department of Mathematics, University of California, Los Angeles, CA
90095}\par\nopagebreak
  \textit{E-mail address}: \texttt{mjandr@math.ucla.edu}

  \medskip

 \textsc{Department of Mathematics, Massachusetts Institute of Technology, 
Cambridge, MA 02139}\par\nopagebreak
  \textit{E-mail address}: \texttt{hrm@math.mit.edu}
}}

\def\be{\begin{equation}}
\def\ee{\end{equation}}
\def\oa{\overline{\alpha}}
\def\V{\mathbb{M}_2}

\def\oBP{\overline{BP}{}}
\def\oH{\overline{H}{}}
\def\Mot{\text{Mot}}
\def\SMot{{\mathbb{S}_{\mathrm{Mot}}}}
\def\STop{\mathbb{S}}
\def\ZZ{\mathbb{Z}}
\def\CC{\mathbb{C}}

\begin{document}

	\title{Inverting the Hopf map}

	\author{Michael Andrews and Haynes Miller}

	\maketitle

\begin{abstract}
We calculate the $\eta$-localization of the motivic stable homotopy ring over $\C$, confirming a conjecture of Guillou and Isaksen. Our approach is via the motivic Adams-Novikov spectral sequence. In fact, work of Hu, Kriz, and Ormsby implies that it suffices to compute the corresponding localization of the classical Adams-Novikov $E_2$-term, and this is what we do. Guillou and Isaksen also propose a pattern of differentials in the localized motivic classical Adams spectral sequence, which we verify using a method first explored by Novikov.
\end{abstract}

\begin{center}
Dedicated to the memory of Goro Nishida (1943-2014)
\end{center}

\numberwithin{equation}{subsection}
\section{Introduction}
\subsection{Overview}

The chromatic approach to stable homotopy theory \cite{hopkins1998nilpotence} rests on the fact that non-nilpotent graded endomorphisms of finite complexes can be essentially classified. 
They are always detected by $MU$ (or, working locally at a prime, by $BP$), and up to taking $p^{\text{th}}$ powers every graded $BP_*BP$-comodule endomorphism survives the Adams-Novikov spectral sequence. 
At the prime $2$, for example, the Hopf map $\eta\in\pi_1(\STop)$ lies in filtration one, and the celebrated nilpotence theorem of Nishida \cite{nishida1973nilpotency} already guarantees that $\eta$ is nilpotent; in fact, we know that $\eta^4=0$. 

On the other hand, the element $\oa_1$ that detects $\eta$ in the Adams-Novikov $E_2$-term $E_2(\STop;BP)$ is non-nilpotent.
This immediate failure of the Adams-Novikov $E_2$-term to accurately reflect $2$-primary stable homotopy even in low degrees has lessened its attractiveness as a computational tool at the prime 2.
One can nevertheless hope to calculate $\oa_1^{-1}E_2(\STop;BP)$ and discover the range in which the localization map is an isomorphism, and this is our first main result.

Some $\oa_1$-free elements have been known for many years. 
The group $E_2^{1,2n}(\STop;BP)$ is cyclic for $n\geq 1$, generated by elements $\oa_n$ closely related to the image of $J$ \cite[Theorem $11.2$]{novikov1967methods}, \cite[Corollary $4.23$]{miller1977periodic}; 
in particular, $\oa_4$ detects $\sigma\in\pi_7(\STop)$.
In \cite{miller1977periodic} it was shown that for $n\neq 2$, $\oa_1^k\oa_n\neq 0$ for all $k\geq 0$. 
The Adams-Novikov differentials on these classes are also well-known and due to Novikov \cite{novikov1967methods}, \cite[p. $171$]{ravenel2004complex}. 
In this paper we show that there are no other $\oa_1$-free generators. A more precise statement is that
\begin{align}
\label{locANE_2}
\oa_1^{-1}E_2(\STop;BP)=
\F_2[\oa_1^{\pm 1},\oa_3,\oa_4]/
(\oa_4^2).
\end{align}
The complete structure of the localized Novikov spectral sequence now
follows from the differential 
\be
d_3\oa_3=\oa_1^4\,.
\label{novd3}
\ee
Since $\oa_1$ is a unit, this differential terminates the localized spectral sequence.

We also find that the localization map
$E_2(\STop;BP)\to\oa_1^{-1}E_2(\STop;BP)$
is an isomorphism above a line of slope $1/5$ when we plot the Adams-Novikov spectral sequence in the usual manner. 
This resolves a question raised by Zahler \cite{zahler1972adams} at the dawn of the chromatic era.

This result is of secondary interest from the perspective of the classical 
homotopy groups of spheres because we already know that $\eta^4=0$. 
However, the advent of motivic homotopy theory has led to interesting related questions. 
There is a ground field in motivic homotopy theory, which for us will be 
the complex numbers, and homotopy is bigraded. 
It is known that the motivic $\eta\in\pi_{1,1}(\SMot)$ is non-nilpotent, and one may ask (as Dugger and Isaksen did \cite{dugger2010motivic}) to calculate $\eta^{-1}\pi_{*,*}(\SMot)$. The determination of this localization is our 
second major result. We will in fact compute 
$\pi_{*,*}(\eta^{-1}((\SMot)^{\wedge}_2))$, but
a simple argument (Lemma \ref{eta-local-completion}) shows that 
$\eta^{-1}\SMot\to\eta^{-1}((\SMot)^{\wedge}_2)$ is an equivalence so we get
an uncompleted result as well. 

We can describe the result in terms of elements in 
$\pi_{*,*}(\SMot)$. There are motivic ``Hopf maps''
\[
\eta\in\pi_{1,1}(\SMot)\,,\quad
\sigma\in\pi_{7,4}(\SMot)
\]
as well as a unique nonzero class $\mu_9\in\pi_{9,5}(\SMot)$
($\!\!$\cite{hornbostel}, 2.10). Then
\[
\label{etalocmotsphere}
\eta^{-1}\pi_{*,*}(\SMot)=\F_2[\eta^{\pm 1},\sigma,\mu_9]/(\sigma^2)
\]

This extends work of Morel \cite{morel}, who verified this calculation 
(and extended it to a general ground field) for coweight zero, and of  
Hu, Kriz, and Ormsby \cite{hu2011remarks}, who showed that the localization
is at least this big, and it verifies a conjecture of Guillou and Isaksen 
\cite{guillou2014eta}. In subsequent work \cite{guillou2016etareal} these
authors carry out the analogous computation over the reals, starting with
the result over $\C$ obtained here. 

\subsection{Methods}

In our approach to \eqref{locANE_2} we
follow Novikov \cite{novikov1967methods} as interpreted in \cite{miller1981relations}: we filter the $BP$ cobar construction by powers of the kernel of the augmentation $BP_*\rightarrow\F_2$. 
The resulting ``algebraic Novkiov spectral sequence'' has the form 
\[H^*(P;Q)\implies E_2(\STop;BP)\]
where $P$ is the Hopf subalgebra of squares in the dual Steenrod algebra $A$ and 
\[
Q=\text{gr}^*BP_*=\F_2[q_0,q_1,\ldots]
\]
is the associated graded of $BP_*$; $q_n$ is the class of the Hazewinkel generator $v_n$. 
In this spectral sequence the element $\oa_1$ is represented by the class of $[\xi_1^2]$, which, following the notational conventions in force at an odd prime, we denote by $h_0$. 
By inverting $h_0$ we obtain a 
``localized algebraic Novikov spectral sequence'' converging to 
$\oa_1^{-1}E_2(\STop;BP)$. 
Our main result is a complete description of this spectral sequence.

The $E_1$-page of the localized algebraic Novikov spectral sequence is given by $h_0^{-1}H^*(P;Q)$. 
The computation of this object parallels the well-known fact \cite{may1981bockstein, miller1978localization} that if $M$ is a bounded below comodule over $A$ then the localized cohomology of $A$ with coefficients in $M$ can be computed
in terms of the homology of $M$ with respect to the differential $\Squ^1$:
\[q_0^{-1}H^*(A;M)=H(M;\text{Sq}^1)\otimes\F_2[q_0^{\pm 1}]\]
where $q_0\in H^{1,1}(A)$ is the class dual to $\text{Sq}^1$. 
The result we obtain is
\[
h_0^{-1}H^*(P;Q)=\F_2[h_0^{\pm 1},q_1^2,q_2,q_3,\ldots]\,.
\]

As in \cite{miller1981relations}, the differentials in the localized algebraic Novikov spectral sequence are calculated by exploiting the connection between $E_2(\STop;BP)$ and the theory of formal groups. 
The facts we rely on are easier to obtain than those used in \cite{miller1981relations} and date back to \cite{miller1976novikov}. 
We prove that 
\begin{align}\label{localgNovdiff}
d_1q_{n+1}=q_n^2h_0\quad \hbox{for }\, n\geq 2\,.
\end{align}
This is the last possible differential, and we arrive at \eqref{locANE_2}.
It follows, incidentally, that for $m,n\geq 0$, $\oa_{2m}\oa_{2n}$ is $\oa_1$-torsion, and that modulo $\oa_1$-torsion the classes $\oa_{2m+1}\oa_{2n+1}$ and $\oa_{2m+1}\oa_{2n}$ depend only on the sum $m+n$. 

This computation leads without difficulty to the motivic result.
Hu, Kriz and Ormsby ($\!\!$\cite{hu2011remarks,hu2011convergence}; 
see also \cite{dugger2010motivic}) study a motivic analogue of the Adams-Novikov spectral sequence. To circumvent incomplete understanding of the motivic analogue of $BP$, they work with its 2-adic completion, which we will denote by
$BPM$. They show that its $E_2$-term is 
\[
E_2(\SMot;BPM)=E_2(\STop;BP)\otimes\ZZ_2[\tau]\,,
\] 
where $\tau$ detects a certain class in 
$\pi_{0,-1}((\SMot)^{\wedge}_2)$. 
Inverting $\oa_1$ results in a spectral sequence which we show converges to
$\pi_{*,*}(\eta^{-1}\SMot)=\pi_{*,*}((\SMot)^\wedge_2)$. 
The unique nonzero differential is
determined by the motivic analogue of \eqref{novd3}, namely
\begin{align}\label{locANdiff}
d_3\oa_3=\tau\oa_1^4\,,
\end{align}
and \eqref{etalocmotsphere} follows.

Our calculation of $\eta^{-1}\pi_{*,*}(\SMot)$ verifies the conjecture
of Guillou and Isaksen \cite{guillou2014eta}). Those authors
approached this calculation by means of the motivic classical Adams spectral sequence. 
Using the motivic May spectral sequence they found that
\begin{align}\label{gi-comp}
h_0^{-1}E_2(\SMot;H)=
\F_2[h_0^{\pm 1},v_1^4,v_2,v_3,\ldots]\,.
\end{align}
Based on extensive computations they conjectured that 
in the localized motivic Adams spectral sequence 
\begin{align}\label{gi-diff}
d_2v_{n+1}=v_n^2h_0\,, n\geq 2\,.
\end{align}
In the last part of this paper we recover 
\eqref{gi-comp} using a localized Cartan-Eilenberg spectral sequence. 
We then use the methods of \cite{miller1981relations} to show that
the Adams differentials \eqref{gi-diff} follow from the differentials 
\eqref{localgNovdiff} in the algebraic Novikov spectral sequence.

\subsection{Organization}
In Section \ref{secane2} we recall the form of the Adams-Novikov 
$E_2$ term and the definition of the elements 
$\oa_i\in E_2^{1,2i+1}(\STop;BP)$.
Next we recall the algebraic Novikov spectral sequence, and in 
Section \ref{seclocane1} we compute the localization of its initial term
and verify vanishing lines required to check its convergence. 
These vanishing lines give us a range of dimensions in which the
localization map is an isomorphism. In Section \ref{seclocanssdiff} 
we construct some permanent cycles, and then bring the theory 
of formal groups into play to compute the differential in the localized
algebraic Novikov spectral sequence and verify \eqref{locANE_2}.

In Section \ref{secmot} we recall the basic setup for motivic stable 
homotopy theory over $\CC$, and prove \eqref{etalocmotsphere}. 
In the final section,
we provide an improved statement and proof of the 
comparison of differentials in the Adams and algebraic Novikov spectral 
sequences ($\!\!$\cite{novikov1967methods, miller1981relations}), 
and observe that the 
local algebraic Novikov differentials then imply the local
motivic Adams differentials conjectured by Guillou and Isaksen.

\subsection{Acknowledgments}
The first author is particularly grateful to Dan Isaksen and Zhouli Xu for their talks on the motivic Adams spectral sequence and its relationship with the Adams-Novikov spectral sequence. 
These served as the inspiration to start trying to compute the Adams-Novikov $E_2$-page. 
We thank Amelia Perry, who created charts of the algebraic Novikov $E_1$-page. 
The idea for the proof of the main theorem came about while she coded computational software for us. 
We are indebted to Dan Isaksen for raising these questions about the $\eta$-local motivic sphere and for several useful conversations. 
We thank him and Bert Guillou for keeping us abreast of their work, and
Dan Dugger, Dan Isaksen, Marc Levine, and Kyle Ormsby
for helpful tutorials on motivic matters.
We are also indebted to Aravind Asok for pointing out oversight in an earlier
draft, to Jens Hornbostel for providing a fix, and to the referee for pointing
out that the completion map induces an isomorphism after inverting $\eta$;
and to Lyubo Panchev for his careful reading of Section 9.

\numberwithin{equation}{section}
\section{The Adams-Novikov spectral sequence}
\label{secane2}

In this paper we follow \cite{miller1977periodic} and work with \emph{right} comodules. 
Given a Hopf algebroid $(A,\Gamma)$ and a right $\Gamma$-comodule $M$, the \emph{cobar construction} $\Omega^*(\Gamma;M)$ has $\Omega^s(\Gamma;M)=
M\otimes_A\overline{\Gamma}\otimes_A\cdots\otimes_A\overline{\Gamma}$ with $s$ copies of $\overline{\Gamma}=\ker(\epsilon:\Gamma\rightarrow A)$, and is equipped with a natural differential \cite[($1.10$)]{miller1977periodic} of degree $1$. 
$H^*(\Gamma;M)$ denotes the cohomology of this complex. 
If $\Gamma$ and $M$ are graded then $H^*(\Gamma;M)$ becomes bigraded; the first index is the cohomological grading and the second is inherited from the gradings on $\Gamma$ and $M$.

Recall (e.g. \cite[\S 2]{miller1976novikov}) the Hopf algebroid given to us by the $p$-typical factor of complex cobordism: $(BP_*,BP_*BP)$. We will work at the prime $p=2$ throughout this paper. The Adams-Novikov 
spectral sequence for a connective spectrum $X$ takes the following form:
\[
E_2^{s,u}=H^{s,u}(BP_*BP;BP_*(X))
{\implies}
\pi_{u-s}(X)\otimes\Z_{(2)},\
d_r:E_r^{s,u}\to E_r^{s+r,u+r-1}\,.
\]

The coefficient ring of $BP$ is a polynomial algebra, 
\[
BP_*=\Z_{(2)}[v_1,v_2,\ldots]\,,\quad|v_i|=2(2^i-1)\,.
\]
As in \cite{miller1977periodic}, we have a short exact sequence of $BP_*BP$-comodules
\[
0\to BP_*\to 2^{-1}BP_*\to BP_*/2^{\infty}\to 0\,,
\]
which gives rise to a connecting homomorphism
\[
\delta:H^{0,*}(BP_*BP;BP_*/2^{\infty})\to H^{1,*}(BP_*BP;BP_*)\,.
\]
Following \cite{miller1977periodic}, 
define $x=v_1^2-4v_1^{-1}v_2\in 2^{-1}v_1^{-1}BP_*$. 
When $s\geq 2$, the image of $x^s/8s$ in $v_1^{-1}BP_*/2^{\infty}$ lies in the subgroup $BP_*/2^{\infty}$. We define
\begin{align*}
\oa_s=
\begin{cases}
\delta (v_1^s/2)   & s\geq 1\text{ and odd}\\
\delta (v_1^2/4)   & s=2\\
\delta (x^{s/2}/4s)& s\neq 2\text{ and even}\,.\\
\end{cases}
\end{align*}

\begin{prop}[{\cite{miller1977periodic}}]
These classes generate $H^{1,*}(BP_*BP)$ and are of the same $2$-order as the denominator of the element to which $\delta$ was applied.
\end{prop}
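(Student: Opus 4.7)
The plan is to exploit the long exact sequence associated with the short exact sequence
$0 \to BP_* \to 2^{-1} BP_* \to BP_*/2^\infty \to 0$ of comodules, namely
\[
H^{0,*}(BP_*BP; 2^{-1}BP_*) \to H^{0,*}(BP_*BP; BP_*/2^\infty) \xrightarrow{\delta} H^{1,*}(BP_*BP; BP_*) \to H^{1,*}(BP_*BP; 2^{-1}BP_*).
\]
Since every formal group law over a $\Q$-algebra is uniquely isomorphic to the additive one, the Hopf algebroid $(2^{-1}BP_*, 2^{-1}BP_*BP)$ has cohomology concentrated in bidegree $(0,0)$, equal to $\Q$. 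Consequently $\delta$ is an isomorphism in positive stems, and the proof reduces to calculating the primitives of $BP_*/2^\infty$ in positive stems and matching them against the specified list.

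First, I would verify that $v_1^s/2$ for odd $s\geq 1$, $v_1^2/4$, and $x^{s/2}/(4s)$ for even $s\neq 2$ are primitive in $BP_*/2^\infty$ (after checking that $x^{s/2}/(4s)$ genuinely reduces from $2^{-1}v_1^{-1}BP_*$ into this subcomodule). This uses the coaction formulas $\psi v_1 = v_1 + 2t_1$ and $\psi v_2 \equiv v_2 + v_1 t_1^2 - v_1^2 t_1 \pmod{2}$ from \cite{miller1976novikov}; indeed, the combination $x = v_1^2 - 4 v_1^{-1} v_2$ is designed precisely so that $\psi x$ agrees with $x$ modulo an ideal small enough to make $x^{s/2}/(4s)$ a cocycle in $\Omega^0(BP_*BP; BP_*/2^\infty)$.

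Second, to see these exhaust the primitives, I would pass along the chromatic inclusion $BP_*/2^\infty \hookrightarrow v_1^{-1} BP_*/2^\infty$, which is injective on primitives in positive stems. The primitives of the right-hand comodule are controlled by height-one formal group theory and can be read off from the Morava/Landweber computation; in each positive internal degree $2s$ one finds a single cyclic summand generated by the image of $v_1^s/2$, $v_1^2/4$, or $x^{s/2}/(4s)$, in agreement with the list, so these classes generate.

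Finally, for the 2-order statement: since $\delta$ is an isomorphism in positive stems it preserves 2-orders, so the 2-order of $\oa_s$ equals that of its defining preimage in $BP_*/2^\infty$, which is manifestly the 2-part of the denominator (e.g. $2\cdot v_1^s/2 = v_1^s \equiv 0$ while $v_1^s/2 \not\equiv 0$ in $BP_*/2^\infty$, and analogously for the other two families). The main technical obstacle I anticipate is the verification that $x^{s/2}/(4s)$ actually lands in $BP_*/2^\infty$ and is primitive there; this is where the careful choice of $x$ and the divisibility analysis of $s$ are essential, and is the delicate input of \cite{miller1977periodic}.
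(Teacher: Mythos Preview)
The paper does not supply its own proof of this proposition; it simply cites \cite{miller1977periodic} and moves on. Your sketch is a faithful outline of the argument in that reference: vanishing of $H^{>0}(BP_*BP;2^{-1}BP_*)$ makes $\delta$ an isomorphism in positive internal degree, reducing everything to the computation of the invariants $H^{0,*}(BP_*BP;BP_*/2^\infty)$, which is carried out there via the chromatic machinery and the explicit formulae for $\eta_R$ modulo the relevant ideals. Your identification of the delicate step---showing that $x^{s/2}/(4s)$ actually lies in $BP_*/2^\infty$ and is primitive---is exactly right, and is the substance of \cite[\S4]{miller1977periodic}. One small remark: your justification for passing to $v_1^{-1}BP_*/2^\infty$ (``injective on primitives in positive stems'') works because $BP_*/2^\infty$ is already $v_1$-torsion-free as a $BP_*$-module, so the localization map itself is injective; you might make that explicit.
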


The element $\oa_1$ is a permanent cycle detecting $\eta\in\pi_1(\STop)$. In Lemma \ref{alpha_1-free} we will show that for 
$s\neq 2$, $\oa_s$ is not killed by any power of $\oa_1$.
This is \cite[Corollary 4.23]{miller1977periodic}, but our proof is different. 
We will analyze the images of these elements in 
$\oa_1^{-1}H^*(BP_*BP)$, an object we will compute explicitly. 
We show that these classes, and 1, generate 
$\oa_1^{-1}H^*(BP_*BP)$ as an 
$\F_2[\oa_1^{\pm 1}]$-vector space.

\section{The algebraic Novikov spectral sequence}
\label{secanss}
One of the best tools for gaining information about the $E_2$-page of the Adams-Novikov spectral sequence is the algebraic Novikov spectral sequence
\cite{novikov1967methods,miller1981relations}, which arises from filtering the
cobar complex $\Omega^*(BP_*BP)$ by powers of the augmentation ideal
\[
I=\ker(BP_*\rightarrow\F_2)\,.
\]

In this paper we will write $A$ for the dual of the Steenrod algebra and 
$P$ for the Hopf subalgebra of squares in $A$. Write
\[\zeta_n=\overline{\xi}_n^2\]
for the square of the conjugate of the Milnor generator $\xi_n$, so that
\[P=\F_2[\zeta_1,\zeta_2,\ldots]\]
with diagonal
\[\Delta\zeta_n=\sum_{i+j=n}\zeta_i\otimes\zeta_j^{2^i}.\]
Define the graded algebra in $P$-comodules
\[
Q=\F_2[q_0,q_1,\ldots] \,,\quad|q_i|=(1,2(2^i-1))\,,
\]
with coaction defined by 
\[
q_n\mapsto\sum_{i+j=n}q_i\otimes\zeta_j^{2^i}\,.
\]
Write $Q^t$ for the component of $Q$ with first gradation $t$; this is the
``Novikov degree.'' Then \cite{miller1981relations} 
\begin{align}\label{assgraded}
\text{gr}^t\Omega^s(BP_*BP)=\Omega^s(P;Q^t)
\end{align}
and the algebraic Novikov spectral sequence takes the form
\begin{align}\label{alg.nss}
E_1^{s,t,u}=H^{s,u}(P;Q^t)
{\implies} H^{s,u}(BP_*BP)\,,\quad
d_r:E_r^{s,t,u}\to E_r^{s+1,t+r,u}\,.
\end{align}

Certain elements will be important for us. We will abbreviate the element in $H^{0,0}(P;Q^1)$ represented by $q_0[\ ]$ to $q_0$, and write $h_n$ for the element in $H^{1,2^{n+1}}(P;Q^0)$ represented by $[\zeta_1^{2^n}]$.

The algebra 
$H^*(P;Q)$ is not only the $E_1$-page for the algebraic Novikov spectral sequence, but also the $E_2$-page for the Cartan-Eilenberg spectral sequence
associated to the extension of Hopf algebras
\[P\to A\to E,\]
where $E$ denotes the exterior algebra 
$E[\overline{\xi}_1,\overline{\xi}_2,\ldots]$. 
The $E_2$ term has the form $H^*(P;H^*(E))$, 
and 
\[
H^*(E)=\F_2[q_0,q_1,\ldots]=Q\,,\quad q_{n-1}=[\overline{\xi}_n]\,.
\]
The coaction of $P$ on $Q$ coincides with the action described above.
The extension spectral sequence 
converges to the $E_2$-page of the Adams spectral sequence:
\begin{align}\label{CESS}
E_2^{s,t,u}=H^{s,u}(P;Q^t)
{\implies} H^{s+t,u+t}(A),\ 
d_r:E_r^{s,t,u}\to E_r^{s+r,t-r+1,u+r-1}\,.
\end{align}
This leads to the following square of spectral sequences
\cite{novikov1967methods,miller1981relations}:

\be
\xymatrixrowsep{38pt}\xymatrix{
H^{s,u}(P;Q^t) \ar@{=>}[rr]^{\text{CESS}}
\ar@{=>}[d]_{\text{ANSS}} 
&& E_2^{s+t,u+t}(\STop;H)\ar@{=>}[d]^{\text{ASS}} \\
E_2^{s,u}(\STop;;BP)\ar@{=>}[rr]^{\text{NSS}}&& \pi_{u-s}(\STop)
}
\label{ss-square}
\ee

It is useful to keep in mind two projections of the trigraded object 
$H^*(P;Q^*)$,
corresponding to the two spectral sequences of which it is an initial term. 
The ``Novikov projection'' displays $(u-s,s)$ and suppresses the filtration 
grading in the algebraic Novikov spectral sequence (Novikov weight); 
it presents the spectral sequence as it will appear in $E_2(\STop;BP)$. 
The ``Adams projection'' displays $(u-s,s+t)$, and suppresses the filtration
grading $s$ in the Cartan-Eilenberg spectral sequence; it presents the 
spectral sequence as it will appear in $E_2(\STop;H)$.

In Figure \ref{fig:H(P;Q)grading} we have shown the low-dimensional part of 
these two projections.
Vertical black lines indicate multiplication by $q_0$. 
The vertical blue arrow indicates a $q_0$ tower which continues indefinitely. 
Black lines of slope one indicate multiplication by $h_0$. 
The blue arrows of slope one indicate $h_0$ towers which continue indefinitely. Green arrows denote algebraic Novikov differentials and red arrows denote Cartan-Eilenberg differentials. 
On the first chart square nodes denote multiple basis elements connected by $q_0$-multiplication; the number to the upper left indicates how many such basis elements.

\begin{figure}
\centering

\begin{tikzpicture}[x=0.6cm,y=0.6cm]
\draw[->,color=black] (0,0) -- (15.5,0);
\foreach \x in {0,2,4,6,8,10,12,14}
\draw[shift={(\x,0)},color=black] (0pt,-2pt) -- (0pt,0pt) node[below] {\footnotesize $\x$};
\draw[->,color=black] (0,0) -- (0,9);
\foreach \y in {0,2,4,6,8}
\draw[shift={(0,\y)},color=black] (-2pt,0pt) -- (0pt,0pt) node[left] {\footnotesize $\y$};
\node at (15.5,-1) {$u-s$};
\node at (-1.5,9) {$s+t$};

\draw [->,red] (5,3) -- (4,4);
\draw [->,red] (11,4) -- (10,5);
\draw [->,red] (13,7) -- (12,8);
\draw [->,green] (15,1) -- (14,3);
\draw [->,green] (15,2) -- (14,5);
\draw [->,green] (15,3) -- (14,6);

\node[black,fill,circle,inner sep=0pt,minimum size=2pt] at (0,0) {};

\node[black,fill,circle,inner sep=0pt,minimum size=2pt] at (0,1) {};
\draw [->,blue] (0,2) -- (0,2.8);
\node[black,fill,circle,inner sep=0pt,minimum size=2pt] at (0,2) {};

\draw [black] (0,0) -- (1,1);
\draw [->,blue] (1,1) -- (1.8,1.8);
\node[black,fill,circle,inner sep=0pt,minimum size=2pt] at (1,1) {};
\draw [gray,dashed] (1.8,1.8) -- (4.8,4.8);

\draw [black] (3,1) -- (3,2);
\node[black,fill,circle,inner sep=0pt,minimum size=2pt] at (3,1) {};
\node[black,fill,circle,inner sep=0pt,minimum size=2pt] at (3,2) {};

\draw [->,blue] (5,3) -- (5.8,3.8);
\node[black,fill,circle,inner sep=0pt,minimum size=2pt] at (5,3) {};

\node[black,fill,circle,inner sep=0pt,minimum size=2pt] at (6,2) {};

\draw [black] (7,1) -- (7,4);
\draw [black] (7,1) -- (9,3);

\foreach \x in {1,2,3,4}
\node[black,fill,circle,inner sep=0pt,minimum size=2pt] at (7,\x) {};

\node[black,fill,circle,inner sep=0pt,minimum size=2pt] at (8,2) {};
\node[black,fill,circle,inner sep=0pt,minimum size=2pt] at (9,3) {};

\draw [->,blue] (8,3) -- (8.8,3.8);
\node[black,fill,circle,inner sep=0pt,minimum size=2pt] at (8,3) {};
\draw [gray,dashed] (8.8,3.8) -- (10.8,5.8);

\draw [->,blue] (9,5) -- (9.8,5.8);
\node[black,fill,circle,inner sep=0pt,minimum size=2pt] at (9,5) {};
\draw [gray,dashed] (9.8,5.8) -- (12.8,8.8);

\draw [black] (11,4) -- (11,6);
\draw [->,blue] (11,4) -- (11.8,4.8);
\foreach \x in {4,5,6}
\node[black,fill,circle,inner sep=0pt,minimum size=2pt] at (11,\x) {};

\draw [->,blue] (13,7) -- (13.8,7.8);
\node[black,fill,circle,inner sep=0pt,minimum size=2pt] at (13,7) {};

\draw [black] (14,2) -- (14,3);
\node[black,fill,circle,inner sep=0pt,minimum size=2pt] at (14,2) {};
\node[black,fill,circle,inner sep=0pt,minimum size=2pt] at (14,3) {};

\draw [black] (14,4) -- (14,6);
\draw [->,blue] (14,4) -- (14.8,4.8);
\foreach \x in {4,5,6}
\node[black,fill,circle,inner sep=0pt,minimum size=2pt] at (14,\x) {};

\draw [black] (15,1) -- (15,8);
\draw [black] (15,1) -- (15.5,1.5);
\foreach \x in {1,2,3,4,5,6,7,8}
\node[black,fill,circle,inner sep=0pt,minimum size=2pt] at (15,\x) {};

\node at (0.28, 0.88) {\scalebox{.6}{$q_0$}};
\node at (1.28, 0.88) {\scalebox{.6}{$h_0$}};
\node at (3.28, 0.88) {\scalebox{.6}{$h_1$}};
\node at (6.28, 1.88) {\scalebox{.6}{$h_1^2$}};
\node at (7.28, 0.88) {\scalebox{.6}{$h_2$}};
\node at (14.28, 1.88) {\scalebox{.6}{$h_2^2$}};
\node at (15.28, 0.88) {\scalebox{.6}{$h_3$}};

\node at (4.2, 5.88) {\scalebox{.6}{$P=\langle h_1,q_0^2,-\rangle$}};

\node at (5.4, 2.88) {\scalebox{.6}{$Ph_0$}};
\node at (9, 4.72) {\scalebox{.6}{$P^2h_0$}};
\node at (13.48, 6.88) {\scalebox{.6}{$P^3h_0$}};

\node at (8, 2.8) {\scalebox{.4}{$\langle h_0,q_0,h_1^2\rangle$}};

\begin{scope}[shift={(0,12)}]
\draw[->,color=black] (0,0) -- (15.5,0);
\foreach \x in {0,2,4,6,8,10,12,14}
\draw[shift={(\x,0)},color=black] (0pt,-2pt) -- (0pt,0pt) node[below] {\footnotesize $\x$};
\draw[->,color=black] (0,0) -- (0,5);
\foreach \y in {0,2,4}
\draw[shift={(0,\y)},color=black] (-2pt,0pt) -- (0pt,0pt) node[left] {\footnotesize $\y$};
\node at (15.5,-1) {$u-s$};
\node at (-1,5) {$s$};

\draw [->,red] (5,1) -- (4,4);
\draw [->,red] (11,1) -- (10,4.2);
\draw [->,red] (13,1) -- (12,4);
\draw [->,green] (15,1) -- (14,1.8);
\draw [->,green] (15,1) -- (14,2.2);

\node[black,fill,rectangle,inner sep=0pt,minimum size=2pt] at (0,0) {};
\node at (-0.12, 0.12) {\scalebox{.4}{$\infty$}};

\draw [black] (0,0) -- (1,1);
\draw [->,blue] (1,1) -- (1.8,1.8);
\node[black,fill,circle,inner sep=0pt,minimum size=2pt] at (1,1) {};
\draw [gray,dashed] (1.8,1.8) -- (4.8,4.8);

\node[black,fill,rectangle,inner sep=0pt,minimum size=2pt] at (3,1) {};
\node at (2.88, 1.12) {\scalebox{.4}{$2$}};

\draw [->,blue] (5,1) -- (5.8,1.8);
\node[black,fill,circle,inner sep=0pt,minimum size=2pt] at (5,1) {};

\node[black,fill,circle,inner sep=0pt,minimum size=2pt] at (6,2) {};

\draw [black] (7,1) -- (9,3);

\node[black,fill,rectangle,inner sep=0pt,minimum size=2pt] at (7,1) {};
\node at (6.88, 1.12) {\scalebox{.4}{$4$}};

\node[black,fill,circle,inner sep=0pt,minimum size=2pt] at (8,2) {};
\node[black,fill,circle,inner sep=0pt,minimum size=2pt] at (9,3) {};

\draw [->,blue] (8,2.2) -- (8.8,3);
\node[black,fill,circle,inner sep=0pt,minimum size=2pt] at (8,2.2) {};
\draw [gray,dashed] (8.8,3) -- (10.8,5);

\draw [->,blue] (9,1) -- (9.8,1.8);
\node[black,fill,circle,inner sep=0pt,minimum size=2pt] at (9,1) {};
\draw [gray,dashed] (9.8,1.8) -- (12.8,4.8);

\draw [->,blue] (11,1) -- (11.8,1.8);
\node[black,fill,rectangle,inner sep=0pt,minimum size=2pt] at (11,1) {};
\node at (10.88, 1.12) {\scalebox{.4}{$3$}};

\draw [->,blue] (13,1) -- (13.8,1.8);
\node[black,fill,circle,inner sep=0pt,minimum size=2pt] at (13,1) {};

\node[black,fill,rectangle,inner sep=0pt,minimum size=2pt] at (14,1.8) {};
\node at (13.88, 1.92) {\scalebox{.4}{$2$}};

\draw [->,blue] (14,2.2) -- (14.8,3);
\node[black,fill,rectangle,inner sep=0pt,minimum size=2pt] at (14,2.2) {};
\node at (13.88, 2.32) {\scalebox{.4}{$3$}};

\draw [black] (15,1) -- (15.5,1.5);
\node[black,fill,rectangle,inner sep=0pt,minimum size=2pt] at (15,1) {};
\node at (14.88, 1.12) {\scalebox{.4}{$8$}};

\node at (1.28, 0.88) {\scalebox{.6}{$h_0$}};
\node at (3.28, 0.88) {\scalebox{.6}{$h_1$}};
\node at (6.28, 1.88) {\scalebox{.6}{$h_1^2$}};
\node at (7.28, 0.88) {\scalebox{.6}{$h_2$}};
\node at (14.28, 1.68) {\scalebox{.6}{$h_2^2$}};
\node at (15.28, 0.88) {\scalebox{.6}{$h_3$}};

\node at (5.4, 0.88) {\scalebox{.6}{$Ph_0$}};
\node at (9.48, 0.88) {\scalebox{.6}{$P^2h_0$}};
\node at (13.48, 0.88) {\scalebox{.6}{$P^3h_0$}};

\node at (7.38, 2.38) {\scalebox{.4}{$\langle h_0,q_0,h_1^2\rangle$}};
\end{scope}

\end{tikzpicture}

\caption{$H^*(P;Q)$ in Novikov and classical Adams projections. 
Green arrows are algebraic Novikov differentials and red arrows are Cartan-Eilenberg differentials.}
\label{fig:H(P;Q)grading}
\end{figure}
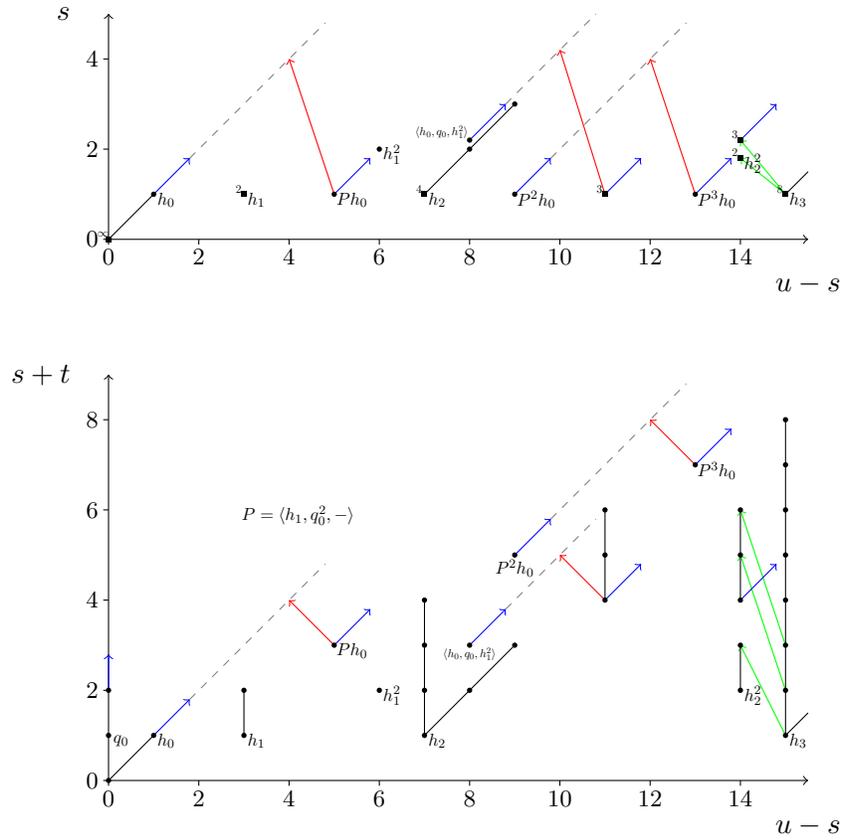

\section{Localizing the algebraic Novikov $E_1$-page.}
\label{seclocane1}

We wish to localize the algebraic Novikov spectral sequence by inverting $h_0$. 
We begin with a well-known 
localization theorem, dealing with comodules over the dual Steenrod algebra $A$. In working with ordinary homology we will work with left comodules, to be
consistent with our presentation of the proof of Theorem \ref{thmcomparison}
below, though of course the categories of left comodules and right comodules
are equivalent via the anti-automorphism in the Hopf algebra $A$.
Write $q_0$ for the class of $[\xi_1]$ in $H^{1,1}(A)$. It acts on $H^*(A;M)$ for any $A$-comodule $M$. Write $E$ for the quotient Hopf algebra 
\[
E=A/(\xi_1^2,\xi_2,\xi_3,\ldots)\,.
\]
It is the exterior algebra generated by the image of $\xi_1$. Any $A$-comodule $M$ becomes an $E$-comodule, and an $E$-comodule structure on $M$ is equivalent to a degree $-1$ differential $\text{Sq}^1$ on $M$ given in terms of the
coaction by
\[
x\mapsto 1\otimes x+\xi_1\otimes x\text{Sq}^1\,.
\]

\begin{prop}
Let $M$ be an $A$-comodule such that $M_u=0$ whenever $u<0$, and consider the following diagram.
\[\xymatrix{H^*(A;M)\ar[r]\ar[d]& H^*(E;M)\ar[d]\\ 
q_0^{-1}H^*(A;M)\ar[r]& q_0^{-1}H^*(E;M)}\]
The top map is surjective in bidegrees $(s,u)$ with $u-s<2s-2$ and an isomorphism in bidegrees with $u-s<2s-5$, i.e. above a line of slope $1/2$ in the usual $(u-s,s)$ plot. The bottom map is an isomorphism and the right map is an isomorphism for bidegrees $(s,u)$ with $s>0$. Moreover,
\[
q_0^{-1}H^*(E;M)=H(M;\text{Sq}^1)\otimes\F_2[q_0^{\pm 1}]\,.
\]
\end{prop}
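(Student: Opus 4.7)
The plan is to treat the direct computation of $H^*(E;M)$ separately from the comparison with $H^*(A;M)$.

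First I would compute $H^*(E; M)$ from the cobar complex. Since $\bar E$ is spanned by the single class $\xi_1$, each $\Omega^s(E; M)$ identifies with $M$ (with internal degree shifted by $s$), and the cobar differential reduces to the $\text{Sq}^1$ action on $M$. Reading off cohomology: $H^0(E; M) = \ker \text{Sq}^1$ and $H^s(E; M) = H(M; \text{Sq}^1)$ for $s \geq 1$. Multiplication by $q_0 = [\xi_1]$ is the shift between consecutive levels, giving the inclusion $\ker \text{Sq}^1 \hookrightarrow H(M; \text{Sq}^1)$ at $s = 0$ and the identity for $s \geq 1$. Inverting $q_0$ immediately yields the formula in (4) and the isomorphism in (3).

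For parts (1) and (2) I would use the Cartan--Eilenberg spectral sequence associated to the normal sub $E_* \subset A_*$ of the Steenrod algebra (with $E_* = \F_2[Q_0]/Q_0^2$), whose dual corresponds to the quotient $A \twoheadrightarrow E$. This takes the form
\[
E_2^{s, t, u} = H^{s, u}\bigl(B;\, H^t(E;M)\bigr) \Longrightarrow H^{s+t, u}(A; M),
\]
where $B$ is the Hopf algebra dual to $A_*/\!/E_*$; the edge map at $s = 0$ recovers the top map of the diagram. The augmentation ideal $\bar B$ is generated in internal degrees $\geq 2$ (the minimum coming from $\xi_1^2$), and $H^t(E; M)$ is a subquotient of $M$ in cobar bidegree with internal degree $\geq t$ (from the first step). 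Combined with the bounded-below hypothesis, these force $E_2^{s \geq 1, t, u} = 0$ below a slope-$1/2$ line. Careful bookkeeping of the remaining contributions is then meant to yield the explicit bounds $u - s < 2s - 2$ (surjectivity) and $u - s < 2s - 5$ (iso).

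Part (2) then follows formally from (1): since $q_0$-multiplication preserves $u - s$ and increments $s$, any class in $q_0^{-1}H^*(A;M)$ or $q_0^{-1}H^*(E;M)$ can be represented by $q_0^{-N}\alpha$ with $\alpha$ of bidegree lying in the iso range of (1) for $N$ sufficiently large. This forces the $q_0$-localized map to be an isomorphism.

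The main obstacle will be part (1): tracking the Cartan--Eilenberg vanishing line carefully enough to extract the precise constants $-2$ and $-5$. Note that the relevant sub $B$ here is \emph{not} the squares algebra $P \subset A$ from Section~\ref{secanss} (since one has $A/\!/P$ exterior on \emph{all} $\xi_i$, whereas $E$ is exterior only on $\xi_1$); identifying the correct $B$ and controlling the induced $B$-coaction on $H^t(E;M)$ is where the delicate work lies.
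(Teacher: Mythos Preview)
Your treatment of $H^*(E;M)$ and the deduction of (2), (3), (4) from (1) are fine and match the paper. The gap is in (1): the degree-counting you propose for the Cartan--Eilenberg spectral sequence does \emph{not} produce a slope-$1/2$ vanishing line, and the missing input is not bookkeeping but a genuine theorem.

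Concretely, with $\sigma=s+t$ the total cohomological degree, your bound $\overline{B}$ starting in degree $2$ and $H^t(E;M)$ starting in degree $t$ gives only $E_2^{s,t,u}=0$ for $u<2s+t=s+\sigma$, i.e.\ $u-\sigma<s$. The worst case $s=1$ leaves $E_2^{1,\sigma-1,u}$ potentially nonzero whenever $u-\sigma\geq 1$, independently of $\sigma$. So the obstruction to the edge map being an isomorphism lies in a vertical strip, not above a line of slope $1/2$; you cannot extract $u-\sigma<2\sigma-2$ from this. (There is a second issue: the edge lands in the $B$-primitives $H^0(B;H^\sigma(E;M))$, which need not be all of $H^\sigma(E;M)$---take $M=B=A\Box_E\F_2$ to see this---so even identifying the edge map with the top map of the square needs care.)

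The paper proceeds differently and more directly. It uses the short exact sequence
\[
0\to M\xrightarrow{\ i\ } A\Box_E M\to L\to 0
\]
together with the change of rings $H^*(A;A\Box_E M)\cong H^*(E;M)$, so that the long exact sequence in $H^*(A;-)$ compares the two groups with error term $H^*(A;L)$. One checks (first for $M=\F_2$, then by filtering by degree) that $i$ is a $\mathrm{Sq}^1$-homology isomorphism, so $L$ is $\mathrm{Sq}^1$-acyclic, equivalently $A(0)$-free. The crucial step is then to invoke the Adams/Anderson--Davis vanishing line theorem: for a bounded-below $\mathrm{Sq}^1$-acyclic $A$-comodule $L$, one has $H^{s,u}(A;L)=0$ for $u-s<2s-2$. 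This is the source of the slope-$1/2$ line and of the precise constants; it is a structural result about the Steenrod algebra (essentially that the minimal $A$-resolution of an $A(0)$-free module climbs at slope $1/2$) and is not visible from cobar-complex degree bounds. Any CESS approach would ultimately have to reprove an equivalent vanishing line for $H^*(B;-)$, which is no easier.
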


\begin{proof}
The cotensor product $A\Box_E M$ is a submodule of $A\otimes M$. Since the coaction map $M\to A\otimes M$ is associative, it factors through a map 
$i:M\to A\Box_E M$. Define $L$ by the following short exact sequence of
$A$-comodules.
\begin{align}\label{L}
\xymatrix{0\ar[r]& M\ar[r]^-i& A\Box_E M\ar[r]& L\ar[r]& 0}
\end{align}
We claim that $H^{s,u}(A;L)=0$ whenever $u-s<2s-2$.

If $M=\F_2$, the middle comodule $A\Box_E\F_2$ is the homology of the integral Eilenberg Mac Lane spectrum. It is well known, in that case, that the map $i$ induces an isomorphism in $\text{Sq}^1$-homology. (One way to see this is to think about the dual: left multiplication by $\text{Sq}^1$ gives a bijection between the Cartan-Serre basis elements for $H^*(H\Z)$ with even leading entry and those with odd leading entry, with the exception of the basis element $1$ in dimension $0$.) Filtering the general comodule $M$ by dimension shows that the same is true in general. We deduce that $L$ is $\text{Sq}^1$-acyclic and so we can apply \cite[Theorem $2.1$]{adams1966periodicity} or \cite[Theorem $1.1$]{anderson1973vanishing} to give the claimed vanishing line for $H^*(A;L)$.

Under the identification $H^*(A;A\Box_E M)=H^*(E;M)$, the map induced by applying $H(A;-)$ to $i$ is the top map in the proposition statement and so the first statement in the proposition follows from the cohomology long exact sequence associated to \eqref{L} and the vanishing line just proved. 

Since $q_0$ acts vertically in $(u-s,s)$ coordinates we find that the bottom map is an isomorphism. The remaining statements follow from the identification
\[
H^*(E;M)=
\frac{\ker(\text{Sq}^1)\otimes\F_2[q_0]}
{\im(\text{Sq}^1)\otimes(q_0)}.
\]
\end{proof}

Thus the localization of the Adams $E_2$-page coincides with the $E_2$-page of the Bockstein spectral sequence. 
In fact \cite{may1981bockstein,miller1981relations} the two spectral sequences coincide from $E_2$ onwards, giving a qualitative strengthening of Serre's observation that $\pi_*(X)\otimes\Q\cong H_*(X;\Q)$.

By doubling degrees we obtain a parallel result for the Hopf subalgebra $P$ of $A$. Now $E$ will be the quotient Hopf algebra $P/(\zeta_1^2,\zeta_2,\ldots)$. Any $P$-comodule $M$ becomes an $E$-comodule, and just as we wrote $\text{Sq}^1$ above we will write $P^1$ for the operator on a right $E$-comodule corresponding to $\zeta_1$. A $P$-comodule splits naturally into even and odd parts, which one can handle separately to prove the following result.

\begin{prop}\label{Ploc}
Let $M$ be a $P$-comodule such that $M_u=0$ whenever $u<0$, and consider the following diagram.
\[\xymatrix{H^*(P;M)\ar[r]\ar[d]& H^*(E;M)\ar[d]\\ 
h_0^{-1}H^*(P;M)\ar[r]& h_0^{-1}H^*(E;M)}\]
The top map is surjective in bidegrees $(s,u)$ with $u-s<5s-4$ and an isomorphism in bidegrees with $u-s<5s-10$, i.e. above a line of slope $1/5$ in the usual $(u-s,s)$ plot. The bottom map is an isomorphism and the right map is an isomorphism for bidegrees $(s,u)$ with $s>0$. Moreover,
\[
h_0^{-1}H^*(E;M)=\F_2[h_0^{\pm 1}]\otimes H(M;P^1)\,.
\]
\end{prop}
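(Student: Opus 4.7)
The plan is to reduce the statement to the preceding proposition by exploiting the observation that, as graded Hopf algebras, $P$ is isomorphic to $A$ with internal degrees doubled. This is visible from $P=\F_2[\zeta_1,\zeta_2,\ldots]$ with $|\zeta_n|=2(2^n-1)$, combined with the fact that the coproduct formulas for $\xi_n$ in $A$ and $\zeta_n$ in $P$ are formally identical. Under this identification, $h_0\in H^{1,2}(P)$ corresponds to $q_0\in H^{1,1}(A)$, the operator $P^1$ corresponds to $\textup{Sq}^1$, and the quotient Hopf algebra $E=P/(\zeta_1^2,\zeta_2,\ldots)$ corresponds to the exterior algebra $E$ of the previous statement.

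Since $P$ is concentrated in even internal degrees, I would first split $M$ canonically as $M=M^{\mathrm{ev}}\oplus M^{\mathrm{odd}}$ into sub-$P$-comodules supported in even and odd internal degrees, and handle the two summands separately. For $M^{\mathrm{ev}}$, halving all internal degrees yields an $A$-comodule $\widetilde{M}$ with $\widetilde{M}_k=M_{2k}$, and the cobar construction transports faithfully, giving $H^{s,2k}(P;M^{\mathrm{ev}})=H^{s,k}(A;\widetilde{M})$; the analogous identification for $M^{\mathrm{odd}}$ requires first shifting internal degrees down by one. The preceding proposition, applied to each $\widetilde{M}$, then immediately gives the bottom-map isomorphism, the right-map isomorphism for $s>0$, and the identification
\[
h_0^{-1}H^*(E;M)=\F_2[h_0^{\pm 1}]\otimes H(M;P^1)\,.
\]

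The remaining work is to translate the vanishing lines. In the $A$-picture we had surjection for $u_A-s<2s-2$ and isomorphism for $u_A-s<2s-5$. The substitution $u_A=u_P/2$ for the even summand gives surjection when $u_P-s<5s-4$ and isomorphism when $u_P-s<5s-10$; the substitution $u_A=(u_P-1)/2$ for the odd summand yields the slightly weaker bounds $u_P-s<5s-3$ and $u_P-s<5s-9$. Taking the more restrictive of each pair recovers the bounds stated in the proposition, which describe a line of slope $1/5$ in the usual $(u-s,s)$ plot.

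The main obstacle I anticipate is purely bookkeeping: ensuring that the suspension used on $M^{\mathrm{odd}}$ before halving does not perturb the vanishing line beyond the harmless shift absorbed into the intercepts $-4$ and $-10$, and that the splitting, shift, and halving interact cleanly with the localization by $h_0$. Nothing here is conceptually difficult, but some care is needed to pin down the precise integer intercepts.
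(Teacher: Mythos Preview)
Your proposal is correct and follows exactly the approach the paper indicates: the paper simply says ``By doubling degrees we obtain a parallel result\ldots A $P$-comodule splits naturally into even and odd parts, which one can handle separately,'' and you have filled in precisely those details, including the correct arithmetic translating the slope-$1/2$ line to the slope-$1/5$ line. One terminological quibble: the odd-summand bounds $u_P-s<5s-3$ and $u_P-s<5s-9$ are actually \emph{stronger} (they cover a larger region), so the even-summand bounds are the binding constraint---but your conclusion is right.
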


As an application, we obtain a calculation of the $h_0$-localization of the $E_1$-page of the algebraic Novikov spectral sequence together with the range in which the localization map is an isomorphism.

\begin{cor}\label{E_1loc}
For any $t$, the localization map $H^*(P;Q^t)\to h_0^{-1}H^*(P;Q^t)$ is surjective in bidegrees $(s,u)$ with $u-s<5s-4$ and an isomorphism in bidegrees with $u-s<5s-10$. Moreover, 
\[
h_0^{-1}H^*(P;Q)=\F_2[h_0^{\pm 1},q_1^2,q_2,q_3,\ldots]\,.
\]
\end{cor}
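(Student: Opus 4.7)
The plan is to apply Proposition \ref{Ploc} to $M = Q^t$ for the range statement and to $M = Q$ for the explicit identification. In both cases the hypothesis $M_u = 0$ for $u < 0$ holds: since $|q_0| = (1,0)$ and $|q_i| = (1,2(2^i-1))$ with $2(2^i-1) > 0$ for $i \geq 1$, every monomial in $Q$ has non-negative internal degree.

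For the range statement, I would chase the commuting square in Proposition \ref{Ploc} applied to $Q^t$. The bottom map is always an isomorphism and the right map is an isomorphism for $s > 0$, so the surjectivity (respectively isomorphism) range for $H^*(P;Q^t) \to h_0^{-1} H^*(P;Q^t)$ is inherited from that of the top map, provided the range lies in $s > 0$. The surjectivity condition $u - s < 5s - 4$ combined with $u \geq 0$ forces $6s > 4$ and hence $s \geq 1$, so this is automatic and both ranges transfer as claimed.

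For the explicit identification, Proposition \ref{Ploc} gives
\[
h_0^{-1} H^*(P;Q) = \F_2[h_0^{\pm 1}] \otimes H(Q; P^1),
\]
so it remains to identify the differential $P^1$ on $Q$ and compute its homology. Since the quotient Hopf algebra $E = P/(\zeta_1^2, \zeta_2, \ldots)$ is primitively generated by $\zeta_1$, the operator $P^1$ is a derivation of the algebra $Q$. Reducing the coaction $q_n \mapsto \sum_{i+j=n} q_i \otimes \zeta_j^{2^i}$ modulo the ideal $(\zeta_1^2, \zeta_2, \ldots)$, the only summand contributing a $\zeta_1$-coefficient is $q_0 \otimes \zeta_1$, which appears from the index $(i,j) = (0,1)$ and only when $n = 1$. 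Hence $P^1(q_1) = q_0$ and $P^1(q_n) = 0$ for $n \neq 1$.

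By the K\"unneth theorem, $H(Q; P^1) = H(\F_2[q_0, q_1]; P^1) \otimes \F_2[q_2, q_3, \ldots]$ since $P^1$ acts trivially on $\F_2[q_2, q_3, \ldots]$. On $\F_2[q_0, q_1]$ the derivation sends $q_0^a q_1^b$ to $b\, q_0^{a+1} q_1^{b-1}$, so its kernel is $\F_2[q_0, q_1^2]$ and its image is the ideal $(q_0)$ therein; hence $H(\F_2[q_0,q_1]; P^1) = \F_2[q_1^2]$, giving the claimed formula. The only delicate point is the bookkeeping involved in reducing the $P$-coaction to the $E$-coaction; the rest is a direct derivation and K\"unneth calculation.
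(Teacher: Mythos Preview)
Your proof is correct and follows essentially the same approach as the paper: apply Proposition~\ref{Ploc} and compute $H(Q;P^1)$ from the derivation $q_1P^1=q_0$, $q_nP^1=0$ for $n\neq 1$. The paper's one-line proof simply records $\ker P^1=\F_2[q_0,q_1^2,q_2,q_3,\ldots]$ and $\im P^1=(q_0)$, whereas you spell out the diagram chase for the range statement and the K\"unneth splitting for the homology, but the substance is identical.
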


\begin{proof}
It is enough to note that $q_1P^1=q_0$, 
$\ker P^1=\F_2[q_0,q_1^2,q_2,q_3,\ldots]$, and $\im P^1=(q_0)$.
\end{proof}

In order to check convergence of the localized algebraic Novikov
spectral sequence we will require some basic vanishing lines, which 
are suggested by the diagrams in Figure \ref{fig:H(P;Q)grading}.
The first one is easy:

\begin{lem}\label{vanishingAdamsNov}
$H^{s,u}(P;Q^t)=0$ when $u-s<s$.
\end{lem}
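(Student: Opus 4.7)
The plan is to show that there are simply no cochains in the claimed range, so the vanishing of cohomology is automatic. Concretely, I would work directly with the cobar complex
\[
\Omega^s(P;Q^t) = Q^t \otimes \overline{P}^{\otimes s},
\]
and check that every element already has internal degree at least $2s$.

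The key observation is a degree count on the tensor factors. Since $P = \F_2[\zeta_1,\zeta_2,\ldots]$ with $|\zeta_n| = 2(2^n-1)$, the augmentation ideal $\overline{P}$ is concentrated in internal degrees $\geq 2$, the minimum being achieved by $\zeta_1$. Hence $\overline{P}^{\otimes s}$ is concentrated in internal degrees $\geq 2s$. On the other hand, the generators $q_i$ of $Q$ have internal degree $2(2^i-1) \geq 0$, so $Q$ (and therefore each $Q^t$) is concentrated in nonnegative internal degrees.

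Combining these, any element of $\Omega^s(P;Q^t)$ of bidegree $(s,u)$ satisfies $u \geq 0 + 2s = 2s$, i.e.\ $u - s \geq s$. So $\Omega^{s,u}(P;Q^t) = 0$ whenever $u-s < s$, and the same is true of its cohomology. The main ``obstacle'' is really just keeping track of bidegrees correctly; there is no subtlety beyond the elementary degree estimate.
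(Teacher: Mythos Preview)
Your proof is correct and is essentially identical to the paper's own argument: both observe that $\overline{P}$ lives in internal degrees $\geq 2$ and $Q^t$ in nonnegative internal degrees, so the cobar group $\Omega^{s,u}(P;Q^t)$ already vanishes for $u<2s$, giving the lemma immediately.
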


\begin{proof}
The cobar construction has the form
\[
\Omega^{s,u}(P;M)=(\overline{P}^{\otimes s}\otimes M)_u
\] 
where $\overline{P}$ denotes the positive-dimensional part of $P$.
Since $\overline{P}_u=0$ for $u<2$, $\Omega^{s,u}(P;M)=0$ for $u<m+2s$
if $M_u=0$ for $u<m$. 

For any $t$, $Q^{t,u}=0$ for $u<0$. Thus
$\Omega^{s,u}(P;Q^t)=0$ for $u<2s$.
\end{proof}

This crude vanishing line can be improved when $u>s$:

\begin{lem}\label{vanishingAdams}
$H^{s,u}(P;Q^t)=0$ when $0<u-s<s+t$.
\end{lem}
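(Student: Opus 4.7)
The plan is to induct on $t$, using the short exact sequence of $P$-comodules
\[
0 \to Q^{t-1} \xrightarrow{q_0} Q^t \to N^t \to 0,
\]
where $q_0\in Q^1$ is primitive (so that $q_0 Q^{t-1}$ is a sub-comodule isomorphic to $Q^{t-1}$), and $N^t = Q^t/q_0 Q^{t-1}$ is spanned, as a graded vector space, by the length-$t$ monomials in $q_1,q_2,\ldots$. Since each such monomial has internal degree at least $2t$, applying the cobar-degree argument of the preceding Lemma~\ref{vanishingAdamsNov} to $N^t$ yields the much stronger vanishing
\[
H^{s,u}(P;N^t) = 0 \quad\text{whenever}\quad u < 2s+2t,
\]
which supplies considerably more room than we need for $Q^t$ itself.

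The base case $t=0$ is Lemma~\ref{vanishingAdamsNov}. For the inductive step, the long exact sequence
\[
\cdots\to H^{s-1,u}(P;N^t)\xrightarrow{\delta}H^{s,u}(P;Q^{t-1})\xrightarrow{q_0}H^{s,u}(P;Q^t)\to H^{s,u}(P;N^t)\to\cdots
\]
combined with the inductive hypothesis on $Q^{t-1}$ and the $N^t$-bound above immediately yields $H^{s,u}(P;Q^t)=0$ in the subrange $u-s < s+t-1$, since both flanking terms vanish there.

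The main obstacle is the single borderline line $u-s = s+t-1$. Because $|\zeta_n|$ and $|q_n|$ are all even, any cochain $[p_1|\cdots|p_s]q$ has even internal degree $u$, so this borderline is automatically vacuous when $t$ is even. When $t$ is odd an extra argument is required. For $t \geq 3$ the $N^t$-bound additionally forces $H^{s-1,u}(P;N^t)=0$ at the borderline, so $\delta=0$ and $q_0$ becomes an isomorphism, reducing the question to the vanishing of $H^{s,2s+(t-1)}(P;Q^{t-1})$, which is one step past the inductive hypothesis for $Q^{t-1}$. I would close this gap by tracking the connecting map for the primitive $[q_1^t]\in N^t$, whose lift satisfies
\[
\psi'(q_1^t) = \sum_{\substack{1\le k\le t\\ \binom{t}{k}\text{ odd}}} q_0^k q_1^{t-k}\otimes\zeta_1^k,
\]
so that after the iso $q_0Q^{t-1}\cong Q^{t-1}$, $\delta$ hits the borderline classes of $H^{*,*}(P;Q^{t-1})$ (multiplication by $h_0^{s-1}$ then propagates this up the $h_0$-tower). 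For $t=1$ the analogous computation gives $\delta[q_1]=h_0$ and its $h_0^{s-1}$-multiples exhaust $H^{s,2s}(P;\mathbb{F}_2)$, supplying the required base case of the descent and completing the induction.
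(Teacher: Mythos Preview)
Your inductive strategy via the short exact sequence $0\to Q^{t-1}\to Q^t\to N^t\to 0$ is different from the paper's argument, and it works cleanly away from the borderline $u-s=s+t-1$ and for even $t$ by parity. The problem is your handling of the borderline for odd $t\geq 3$. You correctly observe that $H^{s-1,u}(P;N^t)=0$ there, so that $q_0$ gives an isomorphism $H^{s,u}(P;Q^{t-1})\cong H^{s,u}(P;Q^t)$, reducing to showing $H^{s,2s+t-1}(P;Q^{t-1})=0$, one step beyond the inductive hypothesis. But your proposed fix---``tracking the connecting map for the primitive $[q_1^t]\in N^t$''---does not address this. The class $[q_1^t]$ lives in $H^{0,2t}(P;N^t)$, so $\delta[q_1^t]$ lands in $H^{1,2t}(P;Q^{t-1})$; for this to sit at the borderline $u=2s+t-1$ one would need $2t=t+1$, i.e.\ $t=1$. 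For $t\geq3$ you have just shown that the domain of $\delta$ in the relevant bidegree vanishes, so there is nothing to track. One can try to continue the descent to $Q^{t-2},Q^{t-3},\ldots$, but the $N^{t'}$-bound $u<2s+2t'$ only allows this while $t'>(t-1)/2$, after which one still needs nontrivial input (for instance, for $t=3$ one eventually needs $H^{s,2s+2}(P;\F_2)=0$ for $s\geq2$ together with a connecting-map computation). None of this is in your sketch.

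The paper avoids the borderline altogether by a different embedding: the algebra map $\phi:Q\to P$, $q_n\mapsto\zeta_n$ (so $q_0\mapsto1$), restricts to an injection of $P$-comodules $Q^t\hookrightarrow P$. Since $H^{>0}(P;P)=0$, the boundary map $H^{s-1,u}(P;P/Q^t)\to H^{s,u}(P;Q^t)$ is surjective for $s>0$, and $P/Q^t$ is $(2t+1)$-connected because the first monomial of $P$ missed by $\phi|_{Q^t}$ is $\zeta_1^{t+1}$. This gives $H^{s,u}(P;Q^t)=0$ for $u<2s+2t$ in one stroke, with no induction and no borderline case.
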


\begin{proof}
The cobar complex itself vanishes if $u$, $s$, or $t$ is negative. The
groups $H^{0,*}(P;Q^t)$ constitute the primitives of $Q^t$, which are 
generated as 
a vector space by $q_0^t$. The constraint $0<u-s$ avoids these classes.

Define an algebra map $\phi:Q\to P$ by sending $q_n$ to $\zeta_n$ (so $q_0$ maps to $1$). Restricting to Novikov degree $t$ gives us an embedding of 
$P$-comodules, and, by the vanishing of $H^*(P;P)$, 
the boundary map $H^{s-1,u}(P;P/Q^t)\to H^{s,u}(P;Q^t)$ 
in the associated long exact sequence is surjective as long as $s>0$.
Now $(P/Q^t)_u=0$ if $u<2(t+1)$, since the first element not in the image of $\phi|_{Q^t}$ is $\zeta_1^{t+1}$. 
Thus $H^{s-1,u}(P;P/Q^t)$ is zero provided that $u<2(s-1)+2(t+1)=2s+2t$. 
Since $Q^t=0$ for $t<0$, we may assume $t\geq0$, and then $u-s<s+t$ implies 
$u<2s+2t$.
\end{proof}

\section{The localized algebraic Novikov spectral sequence}
\label{seclocanss}

The algebraic Novikov spectral sequence is multiplicative since it is obtained by filtering the $DG$ algebra $\Omega^*(BP_*BP)$ by powers of a differential ideal. Since the class $h_0\in E_1^{1,0,2}$ is a permanent cycle and 
$H^*(BP_*BP)$ is commutative, inverting $h_0$ gives 
a new multiplicative spectral sequence.

In forming this localization we may lose convergence; this is the issue at stake in the ``telescope conjecture'' of chromatic homotopy theory. Here we are lucky, however. Convergence is preserved because, as was the case in \cite{miller1981relations}, the operator we are inverting acts parallel to a vanishing line. This vanishing line is visible in the lower diagram in figure \ref{fig:H(P;Q)grading} and is the content of Proposition \ref{vanishingAdams}. 

The vanishing line has the following two implications for the algebraic Novikov
spectral sequence.
\begin{enumerate}
\item 
If $x\in H^*(BP_*BP)$ is not killed by any power of $\oa_1$, 
then for some $k$, $\oa_1^kx$ has a representative at $E_1$ that is not killed
by any power of $h_0$. 
\item 
For any $a\in E_1$, $h_0^ka$ is a permanent cycle for all sufficiently 
large $k$ (though it may be zero).
\end{enumerate}

The first fact tells us that we detect everything we are supposed to; the second fact tells us that we do not detect more than we are supposed to. A more thorough account of similar convergence issues may be found in \cite{andrews2013periodic}.

Coupled with the isomorphism range in Corollary \ref{E_1loc}, the natural map of spectral sequences from the algebraic Novikov spectral sequence to its localized counterpart implies an isomorphism range for the $\oa_1$-localization of the Adams-Novikov $E_2$-page for the sphere. 

\begin{prop}
The localization map
\[
H^*(BP_*BP)\to\oa_1^{-1}H^*(BP_*BP)
\]
is surjective in bidegrees $(s,u)$ for which $u-s<5s-4$ and an isomorphism in bidegrees for which $u-s<5s-10$, i.e. above a line of slope $1/5$ in the usual $(u-s,s)$ plot.
\end{prop}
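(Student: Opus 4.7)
The plan is to compare the algebraic Novikov spectral sequence $H^{s,u}(P;Q^t)\implies H^{s,u}(BP_*BP)$ with its $h_0$-localization, which by the convergence discussion preceding the statement converges to $\oa_1^{-1}H^{*,*}(BP_*BP)$. At $E_1$, Corollary \ref{E_1loc} tells us that the localization map is surjective in bidegrees $(s,u)$ with $u-s<5s-4$ and an isomorphism in bidegrees with $u-s<5s-10$. The task is to promote this to the abutment via a standard page-by-page comparison.

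The geometric input is a slope check. A differential $d_r\colon E_r^{s,t,u}\to E_r^{s+1,t+r,u}$ shifts $(u-s,s)$ to $(u-s-1,s+1)$, so differentials move along lines of slope $-1$ in the Novikov plot, while the boundary lines $u-s=5s-c$ have slope $1/5$. A direct substitution shows: if $(u-s,s)$ lies in the iso range $u-s<5s-10$, then the target $(u-s-1,s+1)$ of any outgoing differential lies in the iso range (indeed satisfies $u-s<5s-16$ there), and the source $(u-s+1,s-1)$ of any incoming differential lies in the surjective range; if $(u-s,s)$ lies in the surjective range $u-s<5s-4$, then the target of any outgoing differential already lies in the iso range.

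These two inclusions feed a standard five-lemma argument that propagates ``iso on the iso range, surjective on the surjective range'' from $E_r$ to $E_{r+1}$ for every $r$, and hence to $E_\infty$: to show surjectivity at $(u-s,s)$, lift a cycle $[y']$ in $E_r'$ through the surjection $f\colon E_r\to E_r'$, and observe that $f(d_r y)=0$ forces $d_r y=0$ because the target of $d_r$ lies in the iso range. The dual argument, using surjectivity at the source of an incoming differential, yields the iso statement. Finally, because each $v_i$ has positive internal degree, the $I$-adic filtration on $\Omega^{s,u}(BP_*BP)$ is finite in each fixed $(s,u)$ (supported in Novikov weights $t\lesssim u/2$), so iso or surjective on $E_\infty^{s,t,u}$ for every $t$ implies the same statement for the abutment $H^{s,u}(BP_*BP)\to\oa_1^{-1}H^{s,u}(BP_*BP)$.

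The main obstacle is purely the compatibility of the differential slope with the range slope described above; once that short arithmetic is settled, the rest is a routine comparison of spectral sequences on top of Corollary \ref{E_1loc} and the convergence facts enumerated earlier in this section.
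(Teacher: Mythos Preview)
Your approach is essentially the paper's: compare the algebraic Novikov spectral sequence to its $h_0$-localization, use Corollary~\ref{E_1loc} at $E_1$, propagate the surjective/iso ranges page by page via the slope arithmetic (the paper packages this as ``a map of cochain complexes that is surjective in degree $n$ and an isomorphism in higher degrees induces the same in cohomology''), and then pass to the abutment.

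There is one genuine gap in your final step. You claim that ``each $v_i$ has positive internal degree,'' so the $I$-adic filtration on $\Omega^{s,u}(BP_*BP)$ is finite in each fixed $(s,u)$. This is false: $I=\ker(BP_*\to\F_2)$ contains $2$, which has internal degree $0$, so $2^t\Omega^{s,u}\subseteq I^t\Omega^{s,u}$ and the filtration is never bounded on the cobar complex itself. Equivalently, $q_0\in Q$ has internal degree $0$, so $\Omega^{s,u}(P;Q^t)$ can be nonzero for arbitrarily large $t$. What is true, and what the paper invokes, is that the \emph{cohomology} $H^{s,u}(P;Q^t)$ vanishes for $t>u-2s$ (Lemma~\ref{vanishingAdams}), in every bidegree except $(0,0)$. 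This is the input you need both to know that the spectral sequence degenerates at a finite page in each tridegree (so that iso/surj on every $E_r$ passes to $E_\infty$) and to know that the induced filtration on the abutment is bounded (so that iso/surj on $E_\infty$ gives iso/surj on $H^{s,u}(BP_*BP)$). Replace your degree argument with an appeal to Lemma~\ref{vanishingAdams} and the proof is complete.
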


\begin{proof}
We use the natural map of spectral sequences from the algebraic Novikov spectral sequence to its localized counterpart. 

First, Lemma \ref{vanishingAdams} implies that in each bidegree $(s,u)$ 
other than $(0,0)$, 
\[
E_1^{s,t,u}=0 \quad\hbox{for}\quad t>u-2s\,.
\]
The same then holds for $E_r$ for all $r\geq0$, 
so by convergence of the spectral sequence
\[
F^{u-2s}H^{s,u}(BP_*BP)=0 \quad\hbox{for}\quad(s,u)\neq(0,0)\,.
\]
Multiplication by $h_0$ preserves $u-2s$ and $t$, so the same facts hold
for the localized spectral sequence and for the filtration of 
$\oa_1^{-1}H^*(BP_*BP)$. 

Next, observe that Lemma \ref{vanishingAdams} implies that
\[
d_r(E_r^{s,t,u})=0 \quad\hbox{for}\quad r>u-2(s+1)-t
\]
and the fact that $Q^t=0$ for $t<0$ implies that 
\[
E_r^{s,t,u}\supseteq\im(d_r)=0\quad\hbox{for}\quad r>t\,,
\]
so in each tridegree the spectral sequence terminates at a finite stage. 

Finally, we claim that for each $r\geq1$ the map at $E_r$ is surjective in bidegrees $(s,u)$ for which $u-s<5s-4$ and an isomorphism in bidegrees for which $u-s<5s-10$. We know this to be true at the $E_1$-page by Proposition \ref{Ploc}; suppose it is true for the $E_r$-page. A $d_r$-differential in the algebraic Novikov spectral sequence has $(s,u)$ bidegree $(1,0)$. Because $u-s<5s-4$ if and only if $u-(s+1)<5(s+1)-10$, it has source in the surjective region if and only if it has target in the isomorphism region. Thus, we can deduce the result for the $E_{r+1}$-page using the following simple observation: if a map of cochain complexes is a surjection in degree $n$ and an isomorphism in higher degrees, then the same is true of the map induced in cohomology. 

The result now follows by an induction on the filtration. 
\end{proof}

\section{Computing the localized algebraic Novikov spectral sequence}
\label{seclocanssdiff}

We begin by identifying some permanent cycles in the algebraic Novikov
spectral sequence. Recall that 
\[
BP_*BP=BP_*[t_1,t_2,\ldots]\,,\quad|t_i|=2(2^i-1)\,.
\]

\begin{lem}\label{BPcocycles}
The following elements are cocycles in the cobar construction $\Omega^*(BP_*BP)$:
\begin{enumerate}
\item $[\ ]$ and $[t_1]$;
\item $v_1^2[t_1]+2v_1[t_1^2]+\frac{4}{3}[t_1^3]$;
\item $v_2[t_1|t_1]+v_1[t_1|t_1^3]-v_1[t_1^2|t_1^2]+v_1[t_1^3|t_1]
-3v_1[t_1|t_2]+2[t_1|t_1t_2]+2[t_1^2|t_1^3]-2[t_1^2|t_2]+2[t_1t_2|t_1]$.
\end{enumerate}
\end{lem}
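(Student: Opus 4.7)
The proof is by direct computation with the cobar differential. Recall that the differential of a cochain $m[\gamma_1|\cdots|\gamma_s]\in\Omega^s(BP_*BP)$ combines a term $\bar\eta_R(m)[\gamma_1|\cdots|\gamma_s]$, where $\bar\eta_R=\eta_R-\eta_L$, with alternating-sign terms from applying the reduced coproduct $\bar\Delta(\gamma)=\Delta\gamma-\gamma\otimes 1-1\otimes\gamma$ to each entry $\gamma_i$. The necessary ingredients come from Quillen's right-unit formula $\eta_R(\ell_n)=\sum_i\ell_i t_{n-i}^{2^i}$ and Hazewinkel's relations $2\ell_1=v_1$ and $2\ell_2=v_2+\ell_1 v_1^2$, which at $p=2$ yield
\[
\eta_R(v_1)=v_1+2t_1,\qquad \eta_R(v_2)=v_2+2t_2-3v_1^2t_1-5v_1t_1^2-4t_1^3,
\]
together with Quillen's coproduct formula $\Delta t_n=\sum_i t_i\otimes t_{n-i}^{2^i}$, which gives $\bar\Delta(t_1)=0$ and $\bar\Delta(t_2)=t_1\otimes t_1^2$.

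Part (1) is immediate: $[\,]\in\Omega^0$ is the unit and $t_1$ is primitive. For Part (2), I would compute $\bar\eta_R(v_1^2)=4v_1t_1+4t_1^2$, $\bar\eta_R(2v_1)=4t_1$, $\bar\Delta(t_1^2)=2t_1\otimes t_1$, and $\bar\Delta(t_1^3)=3(t_1\otimes t_1^2+t_1^2\otimes t_1)$. Applying $d$ to the chain produces multiples of $[t_1|t_1]$, $[t_1|t_1^2]$, and $[t_1^2|t_1]$, and with the alternating cobar signs each pair cancels. An alternate route: the same expansion of $\eta_R(v_1)^3$ shows that $v_1^3/2\in\Omega^0(BP_*/2^{\infty})$ is a cocycle with $\delta(v_1^3/2)=3v_1^2[t_1]+6v_1[t_1^2]+4[t_1^3]$, and the chain in (2) is $1/3$ times this; the cocycle property is then inherited from the fact that $\delta$ carries cocycles to cocycles and $3\in\Z_{(2)}^\times$.

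For Part (3) the plan is identical but the bookkeeping is substantial. The chain has nine summands; one applies $\bar\eta_R$ to each $BP_*$-coefficient (the four integer-coefficient summands contribute nothing from this side) and $\bar\Delta$ to each of the cobar entries, which belong to the finite set $\{t_1,t_1^2,t_1^3,t_2,t_1t_2\}$. The reduced coproduct on the product $t_1t_2$ is computed by the Leibniz rule:
\[
\bar\Delta(t_1t_2)=t_1^2\otimes t_1^2+t_1\otimes t_2+t_2\otimes t_1+t_1\otimes t_1^3.
\]
The resulting 3-cochain decomposes as a finite sum indexed by tensor triples $[\gamma_1|\gamma_2|\gamma_3]$ with entries in the above five-element set, and one checks that the coefficient attached to each such triple sums to zero.

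The main obstacle is the organization of Part (3): on the order of thirty 3-cochain terms must be tracked with signs, and the precise integer coefficients in the stated chain (in particular the $-3v_1$ and the four $\pm 2$'s) are tuned exactly so that every tensor triple receives total coefficient zero. In practice one tabulates the contributions of each summand, groups by target triple, and verifies line-by-line that the interactions between $\bar\eta_R(v_2)[t_1|t_1]$, the Leibniz expansion of $\bar\Delta(t_1t_2)$, and the $\bar\Delta$-contributions from $v_1[t_1|t_2]$ and $v_1[t_1|t_1^3]$ cancel; the check is patient arithmetic but requires no conceptual input beyond the formulas above.
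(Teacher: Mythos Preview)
Your approach---direct computation with the cobar differential---is exactly what the paper does; its entire proof reads ``Direct calculation.'' Your ingredients for Parts (1) and (2) are correct and the outline for Part (3) is the right one.

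There is, however, a genuine error in one of your inputs. The identity $\Delta t_n=\sum_i t_i\otimes t_{n-i}^{2^i}$ is \emph{not} Quillen's coproduct formula in $BP_*BP$; it holds only modulo the invariant ideal $I=(2,v_1,v_2,\ldots)$, i.e.\ in the associated graded Hopf algebra $P$. Quillen's actual relation is the formal-group-law sum $\sum^F_n\Delta t_n=\sum^F_{i,j}t_i\otimes t_j^{2^i}$, and expanding it at $p=2$ gives
\[
\bar\Delta(t_2)=t_1\otimes t_1^2-v_1\,t_1\otimes t_1\,.
\]
(The paper itself uses exactly this correction term later, in the proof of Lemma~\ref{t_1tensort_1}.) The missing $-v_1\,t_1\otimes t_1$ feeds into the differential of the summands $-3v_1[t_1|t_2]$, $2[t_1|t_1t_2]$, $-2[t_1^2|t_2]$, and $2[t_1t_2|t_1]$ in Part (3), and without it your bookkeeping will not close up: you will be left with uncancelled multiples of $[t_1|t_1|t_1]$-type terms. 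Once you use the correct $\bar\Delta(t_2)$ (and propagate it through the Leibniz expansion of $\bar\Delta(t_1t_2)$), the rest of your plan---tabulate contributions and check that each tensor triple receives total coefficient zero---goes through exactly as you describe.
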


\begin{proof}
Direct calculation.
\end{proof}

\begin{cor}\label{permcycles}
The following elements are cocycles in the cobar construction $\Omega^*(P;Q)$:
\begin{enumerate}
\item $[\ ]$ and $[\zeta_1]$;
\item $q_1^2[\zeta_1]+q_0q_1[\zeta_1^2]+q_0^2[\zeta_1^3]$;
\item $q_2[\zeta_1|\zeta_1]+q_1[\zeta_1|\zeta_1^3]+q_1[\zeta_1^2|\zeta_1^2]+
q_1[\zeta_1^3|\zeta_1]+q_1[\zeta_1|\zeta_2]+q_0[\zeta_1|\zeta_1\zeta_2]+
q_0[\zeta_1^2|\zeta_1^3]+q_0[\zeta_1^2|\zeta_2]+q_0[\zeta_1\zeta_2|\zeta_1]$.
\end{enumerate}
Moreover, these elements define the classes $1$, $h_0$,
$\langle h_1,q_0^2,h_0\rangle$ and $\langle h_0,q_0,h_1^2\rangle$ in $H^*(P;Q)$ and they are permanent cycles in the algebraic Novikov spectral sequence.
\end{cor}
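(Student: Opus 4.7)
The plan is to deduce Corollary~\ref{permcycles} from Lemma~\ref{BPcocycles} via the filtration identification \eqref{assgraded}, $\mathrm{gr}^t\,\Omega^s(BP_*BP)=\Omega^s(P;Q^t)$, under which $v_i\mapsto q_i$, $2\mapsto q_0$, and $t_i\mapsto\zeta_i$. A cocycle in $F^t\Omega^*(BP_*BP)$ descends to a cocycle in $\Omega^*(P;Q^t)$, and its cohomology class in $E_1^{*,t,*}=H^*(P;Q^t)$ is automatically a permanent cycle in the algebraic Novikov spectral sequence because it lifts to an honest cocycle in the ambient complex; this disposes of the cocycle and permanent-cycle claims in one stroke. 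Matching the reductions to the stated formulas is routine bookkeeping: the element $v_1^2[t_1]+2v_1[t_1^2]+\tfrac{4}{3}[t_1^3]$ sits in $F^2$ (note $4/3\in\Z_{(2)}$ has $2$-adic valuation $2$, with $1/3\equiv 1\pmod 2$) and reduces to (2), while every summand of the third element has Novikov weight exactly $1$ and collapses to (3) after signs and the factor $3$ are reduced mod $2$.

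It remains to identify the cohomology classes. The assertions $1=[\,]$ and $h_0=[\zeta_1]$ are immediate, so only the two Massey products require argument. The plan is to exhibit defining systems $(a,b,c,x,y)$ with $dx=ab$ and $dy=bc$, and then to check that $ay+xc$ (computed in the cobar DGA in characteristic~$2$) recovers the displayed representative. For $\langle h_1,q_0^2,h_0\rangle$, take $a=[\zeta_1^2]$, $b=q_0^2$, $c=[\zeta_1]$; the coaction formulas $\psi(q_1^2)=q_1^2\otimes 1+q_0^2\otimes\zeta_1^2$ and $\psi(q_0q_1)=q_0q_1\otimes 1+q_0^2\otimes\zeta_1$ supply $x=q_1^2$ and $y=q_0q_1$. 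For $\langle h_0,q_0,h_1^2\rangle$ take $a=[\zeta_1]$, $b=q_0$, $c=[\zeta_1^2|\zeta_1^2]$, $x=q_1$; an appropriate $y$ can be built from $q_0[\zeta_1\zeta_2]$ (whose coboundary already contains the required $q_0[\zeta_1^2|\zeta_1^2]$, coming from the $\zeta_1^2\otimes\zeta_1^2$ summand of $\bar\Delta(\zeta_1\zeta_2)$) plus corrections of the form $q_i[\alpha]$ designed to kill the remaining contributions.

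The main obstacle is keeping the twisted product on $\Omega^*(P;Q)$ straight: multiplying a $0$-cochain $n\in Q$ into a positive-degree cochain from the wrong side threads the coaction $\psi(n)$ through the bar entries, and it is precisely this threading that accounts for the ``correction'' summand $q_0^2[\zeta_1^3]$ in~(2) and for most of the $q_0$-summands in~(3); a naive juxtaposition would miss these. Once the product rule is pinned down each Massey product verification reduces to a short direct calculation.
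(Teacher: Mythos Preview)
Your approach is essentially the same as the paper's: deduce the cocycle and permanent-cycle claims from Lemma~\ref{BPcocycles} via the identification \eqref{assgraded}, then exhibit defining systems for the two Massey products. Your treatment of $\langle h_1,q_0^2,h_0\rangle$ is in fact more detailed than the paper's, which simply calls it ``straightforward''; your observation about the twisted product correctly accounts for the $q_0^2[\zeta_1^3]$ term.

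The one place where you are less complete than the paper is the second Massey product. You correctly identify $a=[\zeta_1]$, $b=q_0$, $c=[\zeta_1^2|\zeta_1^2]$, $x=q_1$, and you note that $y$ should be built from $q_0[\zeta_1\zeta_2]$ plus corrections, but you stop short of naming $y$. This is precisely the content the paper singles out: the proof there consists of the single observation that
\[
d\bigl(q_2[\zeta_1]+q_1[\zeta_2+\zeta_1^3]+q_0[\zeta_1\zeta_2]\bigr)=q_0[\zeta_1^2|\zeta_1^2]\,.
\]
With this $y$ in hand, the twisted product $ay+xc$ (using $\psi(q_2)=q_2\otimes1+q_1\otimes\zeta_1^2+q_0\otimes\zeta_2$ and $\psi(q_1)=q_1\otimes1+q_0\otimes\zeta_1$ to thread the coaction through the bar entry, exactly as you describe) reproduces the nine-term expression in~(3) on the nose. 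So your plan is sound; you should just record this explicit $y$ rather than leaving it as ``corrections to be designed.''
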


\begin{proof}
Checking the first Massey product is straightforward. 
The second stops being  difficult once one realizes that 
$d\left(q_2[\zeta_1]+q_1[\zeta_2+\zeta_1^3]+q_0[\zeta_1\zeta_2]\right)=
q_0[\zeta_1^2|\zeta_1^2]$.
\end{proof}

The main result of this section is the following proposition, which completely describes the localized algebraic Novikov spectral sequence.

\begin{prop}\label{localg.nss}
In the localized algebraic Novikov spectral sequence 
\[
h_0^{-1}H^*(P;Q)=\F_2[h_0^{\pm 1},q_1^2,q_2,q_3,\ldots]
\implies \oa_1^{-1}E_2(\STop;BP)\,,
\]
the elements $1$, $h_0$, $q_1^2$ and $q_2$ are permanent cycles, while $d_1q_{n+1}=q_n^2h_0$ for $n\geq 2$.
\end{prop}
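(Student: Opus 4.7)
\emph{Permanent cycles.} The classes $1$ and $h_0$ are immediate, since $[\,]$ and $[t_1]$ from Lemma~\ref{BPcocycles}(1) are cocycles in the unfiltered $BP$-cobar complex $\Omega^*(BP_*BP)$. For $q_1^2$ and $q_2$, Corollary~\ref{permcycles}(2),(3) supplies explicit cocycles representing $h_0\cdot q_1^2$ and $h_0^2\cdot q_2$ as reductions of the $BP$-cobar cocycles from Lemma~\ref{BPcocycles}(2),(3). These are permanent cycles on the algebraic Novikov $E_1$-page; since $h_0$ is a permanent cycle and a unit after localization, $q_1^2$ and $q_2$ are permanent cycles in the localized spectral sequence.

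\emph{Setup for the differentials.} For $d_1 q_{n+1}=q_n^2 h_0$ with $n\geq 2$, the plan is to work at the level of the $BP$-cobar complex. Under the Bockstein identification $h_0^{-1}H^*(P;Q)\cong\F_2[h_0^{\pm 1}]\otimes H(Q;P^1)$ of Corollary~\ref{E_1loc}, the class $q_{n+1}$ in the localized $E_1$-page is represented by $h_0^{-s}\omega_{n+1}$ for some integer $s\geq 0$ and some genuine cocycle $\omega_{n+1}\in\Omega^s(P;Q^1)$. Lifting $\omega_{n+1}$ to $\tilde\omega_{n+1}\in F^1\Omega^s(BP_*BP)$ and applying the cobar differential, one reads off $h_0^s\cdot d_1 q_{n+1}$ from the image of $d\tilde\omega_{n+1}$ in $F^2/F^3=\Omega^{s+1}(P;Q^2)$.

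\emph{Formal group computation and main obstacle.} The computation reduces to understanding the Novikov-weight-$2$ part of $\eta_R(v_{n+1})-v_{n+1}$ modulo $I^3$, up to $d_0$-coboundaries. Using the standard formulas for $\eta_R$ on the Hazewinkel generators in $BP_*BP$ (as in \cite{miller1976novikov}), one finds a $v_n^2 t_1$ contribution with coefficient $1$ mod $2$---coming from the $-\eta_R(m_1)\eta_R(v_n)^2$ term in the recursive expression for $\eta_R(v_{n+1})$---alongside ancillary Novikov-weight-$2$ terms such as $v_{n-1}^2 t_1^{2^n+1}$ and pure-$t$ multiples of $2$. The plan is to verify, by direct expansion, that every Novikov-weight-$2$ summand other than $v_n^2 t_1$---together with any coboundary corrections from the lift $\tilde\omega_{n+1}$---becomes a $d_0$-coboundary in $\Omega^{s+1}(P;Q^2)$, leaving $v_n^2 t_1$ as the unique representative of $h_0^s\cdot d_1 q_{n+1}$. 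This gives $d_1 q_{n+1}=q_n^2 h_0$. The technical heart of the proof, and its main obstacle, is precisely this $d_0$-cohomology bookkeeping; for $n=2$ it can be verified by an explicit expansion of $\eta_R(v_3)$, and the general case follows by the same pattern using the recursive structure of the Hazewinkel relations.
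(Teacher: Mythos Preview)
Your treatment of the permanent cycles is fine and matches the paper. The setup for the differential is also correct in outline: one does lift a representative of $q_{n+1}h_0^N$ from $\Omega^*(P;Q^1)$ to $F^1\Omega^*(BP_*BP)$, apply $d$, and read off the answer in $\Omega^*(P;Q^2)$. You have also correctly isolated the $v_n^2t_1$ term in $\eta_R(v_{n+1})$.

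The gap is in the last paragraph. You propose to verify ``by direct expansion'' that all other Novikov-weight-$2$ contributions, including those coming from the unknown correction terms in the lift $\tilde\omega_{n+1}$, are $d_0$-coboundaries in $\Omega^{s+1}(P;Q^2)$. But you have no explicit formula for $\omega_{n+1}$: its existence comes only from the surjectivity range in Proposition~\ref{Ploc}, and writing it down for general $n$ is not feasible. So ``direct expansion'' is not available, and the assertion that the general case ``follows by the same pattern'' as $n=2$ is not a proof.

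The paper sidesteps this obstacle by a genuinely different maneuver. Rather than computing in $H^*(P;Q^2)$, it pushes forward along $\Omega^*(P;Q)\to\Omega^*(E;Q)$ with $E=E[\zeta_1]$; by Proposition~\ref{Ploc} this detects the localized differential. In $\Omega^*(E;Q)$ the only surviving basis elements are powers $[\zeta_1]^j$, so the question becomes: what is the coefficient of $[t_1]^{N+1}$ in $d(v_{n+1}[t_1]^N+y)$? The contribution from $v_{n+1}[t_1]^N$ is $v_n^2$ modulo $I^3$ as you found. For the unknown $y$, the paper proves a structural lemma (Lemma~\ref{t_1tensort_1}): the only $t$-monomials whose diagonal contains a $BP_*$-multiple of $t_1\otimes t_1$ are $t_1^2$ and $t_2$. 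Hence only terms of $y$ of the form $v_i[t_1|\cdots|t_1^2|\cdots|t_1]$ or $v_i[t_1|\cdots|t_2|\cdots|t_1]$ could contribute to $[t_1]^{N+1}$, and a degree count shows that for $n\geq 2$ no such term can occur with a coefficient $v_i$. This argument works uniformly in $n$ without ever knowing $y$ explicitly, and it is the missing idea in your proposal.
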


\begin{proof}
The images in $\Omega^*(E[\zeta_1];Q)$ of the elements in Corollary \ref{permcycles} are $[\ ]$, $[\zeta_1]$, $q_1^2[\zeta_1]$, $q_2[\zeta_1|\zeta_1]$, respectively. 
So there are permanent cycles in the algebraic Novikov $E_2$-term that map to $1$, $h_0$, $q_1^2h_0$, and $q_2h_0^2$ in the localized $E_2$-term. We are left with proving the differential, so suppose that $n\geq 2$ and write $E$ for $E[\zeta_1]$.

By Proposition \ref{Ploc}, there exists a positive integer $N$ such that $q_{n+1}h_0^N$ is in the image of $H^*(P;Q^1)\rightarrow H^*(E;Q^1)$. Pick an $X$ in $H^{N,*}(P;Q^1)$ mapping to $q_{n+1}h_0^N$. To complete the proof of the proposition it is enough to calculate $d_1X$ in the unlocalized algebraic Novikov spectral and check that its image under $H^*(P;Q)\rightarrow H^*(E;Q)$ is $q_n^2h_0^{N+1}$.

Since $\Omega^*(P;Q)\rightarrow\Omega^*(E;Q)$ is surjective, we can find a cocycle in $\Omega^*(P;Q)$ that represents $X$ and maps to $q_{n+1}[\zeta_1]^N$ in $\Omega^*(E;Q)$. Now all elements of the monomial basis for $\Omega^*(P;Q)$ that include a tensor factor containing some monomial in $P$ other than $\zeta_1$ map to zero in $\Omega^*(E;Q)$. This means that when we write our cocycle in this monomial basis it must contain the term $q_{n+1}[\zeta_1]^N$. We write the cocyle representing $X$ as $q_{n+1}[\zeta_1]^N+x$, where $x$ is a linear combination of other basis elements.

By \eqref{assgraded} we have a surjection
\[
I\Omega^*(BP_*BP)\to\Omega^*(P;Q^1)\,.
\]
We will make use of the set-theoretic splitting that in each term of a linear combination of monomial basis elements replaces each $\zeta_i$ by $t_i$ and each $q_i$ by $v_i$. (Remember that $v_0=2$; this splitting is not linear.) With this choice of splitting, $v_{n+1}[t_1]^N+y$ is selected to map to our cocycle representing $X$, where $y$ is a linear combination of terms, each of which involves, as a tensor factor, some monomial in the $t_i$'s other than the monomial $t_1$, and such that each nonzero coefficient is $v_i$ for some $i$.

Since $q_{n+1}[\zeta_1]^N+x\in\Omega^*(P;Q)$ is a cocycle, $d(v_{n+1}[t_1]^N+y)\in I^2\Omega^*(BP_*BP)$. Mapping to 
$\text{gr}^2\Omega^*(BP_*BP)=\Omega^*(P;Q^2)$ gives an element representing $d_1X\in H^*(P;Q)$. As explained at the start of the proof, we wish to understand the image of this element in $H^*(E;Q)$. 

To do this we will consider the $BP_*$-basis of the cobar construction given by placing a monomial in the $t_i$'s in each tensor factor. Any element of $I^2\Omega^*(BP_*BP)$ is uniquely a linear combination of these elements with coefficients in $I^2$. Of these terms, only those of the form $\alpha[t_1]^j$ with $\alpha\notin I^3$ map nontrivially to $\Omega^*(E;Q^2)$. The elements 
$d(v_{n+1}[t_1]^N)$ and $dy$ are linear combinations of these basis elements with coefficients in $BP_*$. Since $q_{n+1}[\zeta_1]^N$ is not a cocycle, neither set of coefficients by themselves need to lie in $I^2$, though their sums do. First, we look at the contribution from $d(v_{n+1}[t_1]^N)$.

\begin{lem}
For $n\geq 1$, the coefficient of $[t_1]^{N+1}$ in $d(v_{n+1}[t_1]^N)$ is $v_n^2$ mod $I^3$.
\end{lem}

\begin{proof}
Because $t_1$ is primitive it is enough to investigate the coefficient of $t_1$ in $\eta_Rv_{n+1}$. Since the elements $\eta_Rv_{n+1}$ and $v_{n+1}=\eta_Lv_{n+1}$ have the same augmentation, we have 
\[
\eta_R(v_{n+1})\equiv v_{n+1}+ct_1\quad\text{mod }(t_1^2,t_2,t_3,\ldots)
\] 
for some $c\in BP_{2(2^{n+1}-2)}$. The only monomial in the $v_i$'s of the degree of $c$ that is not in $I^3$ is $v_n^2$. Moreover, $2v_n^2t_1\in I^3$ so that
\[
\eta_R(v_{n+1})\equiv v_{n+1}+bv_n^2t_1\quad\text{mod }I^3+(t_1^2,t_2,t_3,\ldots)
\]
where $b=0$ or $1$. Since \cite[$5.1$]{miller1976novikov}
\[
\eta_R(v_{n+1})\equiv v_{n+1}+v_nt_1^{2^n}-v_n^2t_1
\quad\text{mod }(2,v_1,\ldots,v_{n-1})
\]
we must have $b=1$.
\end{proof}

Mapping $v_n^2[t_1]^{N+1}\in I^2\Omega^*(BP_*BP)$ to $\text{gr}^2\Omega^*(BP_*BP)=\Omega^*(P;Q^2)$ gives $q_n^2[\zeta_1]^{N+1}$. Mapping further to $\Omega^*(E;Q)$, gives a cocycle representing $q_n^2h_0^{N+1}$. In order to complete the proof it suffices to show that the coefficient of $[t_1]^{N+1}$ in $dy$ 
is zero.

Recall that $y$ is a linear combination of terms, each of which involves, as a tensor factor, some monomial in the $t_i$'s other than the monomial $t_1$. The differential in the cobar complex makes use of the right unit and the diagonal map in $BP_*BP$. When evaluating $dy$, the right unit is used on the coefficients of the terms in $y$. Since none of the monomials occurring in $y$ are of the form $[t_1]^N$, $BP_*$ multiples of $[t_1]^{N+1}$ cannot arise from this part of the differential. Thus we just need to consider terms coming from the diagonal map. The following simple lemma is crucial.

\begin{lem}\label{t_1tensort_1}
The only monomials in the $t_i$'s that contain a nonzero $BP_*$-multiple of $t_1\otimes t_1$ in their diagonal are $t_1^2$ and $t_2$.
\end{lem}

\begin{proof}
Recall that we have an inclusion $BP_*\subset H_*(BP)=\Z_{(2)}[m_1,m_2,\ldots]$ given by the Hurewicz homomorphism. Thus, we can compute in $H_*(BP\wedge BP)=H_*(BP)[t_1,t_2,\ldots]$ and there we have (see \cite{miller1976novikov}) an inductive formula for the diagonal of $t_n$.
\[
\Delta t_n=
\sum_{i+j+k=n}m_it_j^{2^i}\otimes t_k^{2^{i+j}}-
\sum_{i=1}^nm_i(\Delta t_{n-i})^{2^i}\,.
\]
The first sum does not contain a term $t_1\otimes t_1$. Moreover, the only terms in $\Delta t_{n-i}$ with a $1$ on one side or the other are $t_{n-i}\otimes 1$ and $1\otimes t_{n-i}$. Thus, in the expression of $(\Delta t_{n-i})^{2^i}$, the only way one can achieve $t_1\otimes t_1$ is with $i=1$ and $n-i=1$ so that $n=2$.

The diagonal is multiplicative and so one can achieve $t_1\otimes t_1$ in the diagonal of a monomial only in the cases $t_2$ and $t_1^2$.
\end{proof}

Now consider a tensor product of monomials, a basis element in the cobar construction. The differential is computed by applying the reduced diagonal to each factor and taking the alternating sum. One receives a term that is a $BP_*$-multiple of $[t_1]^{N+1}$ only by starting with a tensor product of monomials in which all but one term is $t_1$, and the remaining term is either $t_1^2$ or $t_2$. 

We should call attention to a subtlety here. When the reduced diagonal is applied to a monomial, the result is a $BP_*$-linear combination of monomials. Given a basis element of the cobar complex, to express the value of the differential on it as a $BP_*$-linear combination of tensor products of such monomials, one needs to pull coefficients outside the tensor products. This operation is nontrivial since the tensor products, while formed over $BP_*$, use the left and right actions on the right and left factors, respectively. In particular, $t\otimes vt'=\eta_R(v)t\otimes t'$. The element $\eta_R(v)$ will itself be a linear combination of monomials in the $t_i$'s (where we now include $1$ as $t_0$) so if the expression involves more than $[t_1]$'s before this maneuver, it will continue to involve more than $[t_1]$'s afterwards as well.

Now recall that $y$ has internal dimension $2(2^{n+1}-1)+2N$. The internal dimensions of 
$[t_1]^{N-i}[t_1^2][t_1]^{i-1}$ and $[t_1]^{N-i}[t_2][t_1]^{i-1}$ 
are $2(N+1)$ and $2(N+2)$, respectively and so the coefficients of these basis elements in $y$ must have internal dimensions $2(2^{n+1}-2)$ and $2(2^{n+1}-3)$, respectively. But recall that the coefficient of each term appearing in $y$ is a $v_i$. The first dimension does not occur as the dimension of a $v_i$, and the second occurs only for $n=1$. 

We note that when $n=1$ such terms do occur, as we see in Lemma \ref{BPcocycles}, where the third cocycle contains the terms $v_2[t_1|t_1]$ and 
$-3v_1[t_1|t_2]$. These provide two canceling $v_1^2[t_1|t_1|t_1]$ terms. Of course, this is how we saw $q_2h_0^2$ was a permanent cycle.

But now we are assuming $n\geq 2$, so the proof of Proposition \ref{localg.nss}
is complete.
\end{proof}

We see immediately from the proposition that the $E_2$-page of the localized algebraic Novikov spectral sequence consists of permanent cycles and so we obtain the following corollary.

\begin{cor}
The $E_{\infty}$-page of the localized algebraic Novikov spectral sequence is \[\F_2[h_0^{\pm 1},q_1^2,q_2]/(q_2^2).\]
\end{cor}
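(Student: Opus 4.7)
The plan is to use Proposition \ref{localg.nss} to compute $E_2 = H(E_1, d_1)$ as an algebra, and then observe that this $E_2$ is generated entirely by permanent cycles, which forces $E_\infty = E_2$.

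On the $E_1$-page $E_1 = \F_2[h_0^{\pm 1}, q_1^2, q_2, q_3, q_4, \ldots]$ the derivation $d_1$ sends $h_0$, $q_1^2$, $q_2$ to $0$ and satisfies $d_1 q_{n+1} = q_n^2 h_0$ for $n \geq 2$. Leibniz in characteristic two gives $d_1(q_n^2) = 0$ for every $n$, so each square is a cycle; and since $h_0$ is a unit, the identity $q_n^2 = h_0^{-1} d_1 q_{n+1}$ exhibits $q_n^2$ as a boundary for every $n \geq 2$. To turn these observations into a complete computation I would filter $E_1$ by the sub-DG-algebras $E_1^{(N)} = \F_2[h_0^{\pm 1}, q_1^2, q_2, q_3, \ldots, q_N]$ (so that $E_1 = \varinjlim_N E_1^{(N)}$) and prove by induction on $N \geq 3$ that $H(E_1^{(N)}) = \F_2[h_0^{\pm 1}, q_1^2, q_2, q_N^2]/(q_2^2)$. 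For the inductive step I would view $E_1^{(N+1)}$ as the polynomial ring $E_1^{(N)}[q_{N+1}]$ and run the spectral sequence of the $q_{N+1}$-degree filtration: because the differential lowers $q_{N+1}$-degree by at most one, this spectral sequence collapses at $E_2$, and a direct computation shows that the class $q_N^2$ surviving in $H(E_1^{(N)})$ becomes the boundary of $h_0^{-1} q_{N+1}$, while the fresh square $q_{N+1}^2$ takes its place as the new surviving generator. Passing to the filtered colimit, each $q_N^2$ dies at the next stage, leaving $E_2 = H(E_1) = \F_2[h_0^{\pm 1}, q_1^2, q_2]/(q_2^2)$.

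It remains to promote $E_2$ to $E_\infty$. Corollary \ref{permcycles} together with the opening assertion of Proposition \ref{localg.nss} tells us that $h_0$, $q_1^2$, and $q_2$ are all permanent cycles; since these generate $E_2$ as an algebra, $d_r$ vanishes on $E_r$ for every $r \geq 2$ and the spectral sequence collapses. The only real subtlety in the argument is the filtered-colimit step in the $E_2$ computation: at each finite stage $N$ the class $q_N^2$ looks like a legitimate survivor in $H(E_1^{(N)})$, and one has to invoke compatibility of homology with filtered colimits of $\F_2$-vector spaces to see that it is actually killed one stage later. Everything else is formal from Proposition \ref{localg.nss}.
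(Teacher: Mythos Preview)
Your proof is correct and follows the same two-step outline as the paper: compute $E_2=H(E_1,d_1)$, then observe that $E_2$ is generated by the permanent cycles $h_0^{\pm1},q_1^2,q_2$ and hence collapses. The paper treats the first step as ``immediate'' from Proposition~\ref{localg.nss} and gives no further argument; your inductive filtered-colimit computation is a perfectly valid way to fill in that step (and your observation that $q_N^2 h_0$ is a non-zero-divisor in $H(E_1^{(N)})$, together with the fact that the genuine cycles $h_0,q_1^2,q_2,q_{N+1}^2$ and the genuine boundary $q_2^2$ resolve any extension ambiguity, makes the induction go through cleanly).
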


\section{What happens to $\oa_s$?}
\label{secalphas}

We now return to the elements $\oa_s$ of Section \ref{secane2}. In order to say what happens to them under the localization map it is convenient to consider the mod $2$ Moore spectrum analogues of our results for $\STop$, which are of interest in their own right. We write $\STop/2$ for the mod $2$ Moore spectrum. Just as before, when we filter 
$\Omega^*(BP_*BP;BP_*(\STop/2))=\Omega^*(BP_*BP/2)$ by powers of the
augmentation ideal $BP_*BP/2\to\F_2$, we arrive at the 
``mod 2 algebraic Novikov spectral sequence'' 
\[
E_1^{s,t,u}=H^{s,u}(P;[Q/(q_0)]^t)
{\implies} 
H^{s,u}(BP_*BP/2)\,,\quad 
d_r:E_r^{s,t,u}\to E_r^{s+1,t+r,u}\,.
\]
Again $h_0\in E_1^{1,0,2}$ is a permanent cycle, and,
continuing to follow the argument above, we arrive at the following result.

\begin{prop}
The localized mod 2 algebraic Novikov spectral sequence converges, and has
\[
E_1=h_0^{-1}H^*(P;Q/(q_0))=\F_2[h_0^{\pm 1},q_1,q_2,\ldots]\,.
\]
The elements $q_1$ and $q_2$ are permanent cycles in this spectral sequence and we have 
\[
d_1q_{n+1}=q_n^2h_0\, \hbox{ for } n\geq 2\,.
\]
The map $\STop\to\STop/2$ induces a map between the localized algebraic Novikov spectral sequences. At the $E_1$-page, the map is given by the inclusion
\[
\F_2[h_0^{\pm 1},q_1^2,q_2,q_3,\ldots]\to\F_2[h_0^{\pm 1},q_1,q_2,\ldots]\,.
\]
At the $E_{\infty}$-page, it is given by the inclusion 
\[
\F_2[h_0^{\pm 1},q_1^2,q_2]/(q_2^2)\to \F_2[h_0^{\pm 1},q_1,q_2]/(q_2^2)\,.
\]
\end{prop}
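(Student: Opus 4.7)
The plan is to run the argument of Proposition \ref{localg.nss} mutatis mutandis with the quotient comodule $Q/(q_0)$ in place of $Q$, then extract the naturality statement from the map $\STop \to \STop/2$ directly. The short exact sequence $0 \to BP_* \xrightarrow{2} BP_* \to BP_*/2 \to 0$ of $BP_*BP$-comodules induces the comparison of algebraic Novikov spectral sequences, and on the associated graded for the augmentation filtration it is the projection $Q \to Q/(q_0)$ (since $q_0$ is the class of $v_0=2$).

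First I would compute the localized $E_1$ page by applying Proposition \ref{Ploc} with $M = Q/(q_0) = \F_2[q_1,q_2,\ldots]$. The operator $P^1$ acts trivially on this quotient, because its only nonzero effect on $Q$ was $q_1 P^1 = q_0$, which now vanishes. Consequently $h_0^{-1} H^*(P; Q/(q_0)) = \F_2[h_0^{\pm 1}] \otimes Q/(q_0) = \F_2[h_0^{\pm 1}, q_1, q_2, \ldots]$. The vanishing line of Lemma \ref{vanishingAdamsNov} holds verbatim for $Q/(q_0)$ (its proof only uses bounded-belowness of the coefficients), and $h_0$ still acts parallel to that line, so the convergence discussion of Section \ref{seclocanss} goes through and the localized spectral sequence converges to $\oa_1^{-1} H^*(BP_*BP/2)$.

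For the permanent cycles, the coaction $q_1 \mapsto q_0 \otimes \zeta_1^2 + q_1 \otimes 1$ reduces modulo $q_0$ to $q_1 \otimes 1$, so $q_1$ is primitive in $Q/(q_0)$ and represents a permanent cycle in $H^{0,*}(P;[Q/(q_0)]^1)$. Reducing the third cocycle of Corollary \ref{permcycles} modulo $q_0$ yields a cocycle in $\Omega^2(P;[Q/(q_0)]^1)$ whose image in $\Omega^*(E[\zeta_1]; Q/(q_0))$ is $q_2[\zeta_1|\zeta_1]$; this realizes $q_2 h_0^2$, and hence $q_2$ itself, as a permanent cycle in the localization.

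The differentials $d_1 q_{n+1} = q_n^2 h_0$ for $n \geq 2$ and the description of the comparison map both drop out by naturality. At $E_1$, the induced map $h_0^{-1} H^*(P;Q) \to h_0^{-1} H^*(P; Q/(q_0))$ is the inclusion $\F_2[h_0^{\pm 1}, q_1^2, q_2, q_3, \ldots] \hookrightarrow \F_2[h_0^{\pm 1}, q_1, q_2, \ldots]$ sending $q_1^2$ and each $q_i$ with $i \geq 2$ to itself, so the differentials of Proposition \ref{localg.nss} transport verbatim. These, together with the permanent cycles $q_1$ and $q_2$, determine the $E_\infty$ page as $\F_2[h_0^{\pm 1}, q_1, q_2]/(q_2^2)$, and the induced map at $E_\infty$ is the claimed inclusion. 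The only point that is not pure transport from the $\STop$ computation is the permanent-cycle status of $q_1$, which is the main (minor) obstacle; the primitivity observation above disposes of it without further calculation.
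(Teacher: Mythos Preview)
Your approach is exactly what the paper intends (it merely says ``continuing to follow the argument above''), and the naturality argument transporting the differentials from Proposition~\ref{localg.nss} is correct. Two small points deserve tightening.

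First, the convergence discussion in Section~\ref{seclocanss} rests on Lemma~\ref{vanishingAdams}, not Lemma~\ref{vanishingAdamsNov}; the $t$-independent bound $u<2s$ from the latter is not what makes the argument go through. What you actually need is the analogue of Lemma~\ref{vanishingAdams} for $Q/(q_0)$, and this follows immediately from the connectivity observation $(Q/(q_0))^{t,u}=0$ for $u<2t$ (the bottom class in Novikov degree $t$ being $q_1^t$), giving $H^{s,u}(P;(Q/(q_0))^t)=0$ whenever $u<2s+2t$.

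Second, primitivity of $q_1$ in $Q/(q_0)$ only places $q_1$ in $E_1$; it does not by itself make it a permanent cycle in the algebraic Novikov spectral sequence. You need the one-line observation that $v_1$ is already invariant in $BP_*/2$ (since $\eta_R v_1=v_1+2t_1\equiv v_1$), so $q_1$ lifts to an actual cocycle in $\Omega^0(BP_*BP/2)$, exactly as in Corollary~\ref{permcycles}. (Incidentally, the coaction is $q_1\mapsto q_0\otimes\zeta_1+q_1\otimes1$, not $q_0\otimes\zeta_1^2$.)
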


We now locate the elements $\oa_s$ in the localized algebraic Novikov
spectral sequence.

\begin{lem}\label{alpha_1-free}
For $s\neq 2$, $\oa_s$ is $\oa_1$-free and its image in $\oa_1^{-1}H^*(BP_*BP)$ is detected by $q_1^{s-1}h_0$ when $s$ is odd and by $q_1^{s-4}q_2h_0$ when $s$ is even.
\end{lem}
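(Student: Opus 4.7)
My plan is to construct explicit cocycle representatives of $\oa_s$ in $\Omega^*(BP_*BP)$, reduce them to the algebraic Novikov $E_1$-page $H^*(P;Q)$, and appeal to the structure $h_0^{-1}E_\infty = \F_2[h_0^{\pm 1}, q_1^2, q_2]/(q_2^2)$ obtained in Proposition~\ref{localg.nss}: by a degree count, the tridegree of $\oa_s$ contains at most one nonzero element in $h_0^{-1}E_\infty$, so once $\oa_1$-freeness is established the detecting class is forced.

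For $s$ odd, I would compute
\[d(v_1^s/2) = \sum_{j\geq 1}\binom{s}{j}\,2^{j-1}v_1^{s-j}[t_1^j]\]
as a cocycle representative of $\oa_s$ in $\Omega^1(BP_*BP)$. Each summand has coefficient of $I$-adic filtration at least $(s-j)+(j-1)=s-1$, and its image in $\Omega^1(P;Q^{s-1})$ is $s\,q_1^{s-1}[\zeta_1]$ from $j=1$ (which equals $q_1^{s-1}h_0$ since $s$ is odd), plus contributions from $j\geq 2$ that either vanish in $Q^{s-1}$ or contain a factor of $q_0$. Since $q_0 h_0 = 0$ in $H^*(P;Q)$ (from
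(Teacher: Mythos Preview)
Your plan and your treatment of the odd case are essentially the paper's own argument. The cocycle representative $\sum_{j\geq1}\binom{s}{j}2^{j-1}v_1^{s-j}[t_1^j]$ is correct, and your filtration count is right. One small correction: the appeal to $q_0h_0=0$ is awkward here, because $q_1^{s-1}[\zeta_1]$ is \emph{not} a cocycle in $\Omega^*(P;Q)$, so you cannot split off the $q_0$-terms as a separate cohomology class. The clean way (and what the paper does implicitly) is to pass to $\Omega^*(E;Q)$ via Proposition~\ref{Ploc}: there $[\zeta_1^j]\mapsto 0$ for $j\geq2$, so the image of your cocycle is simply $q_1^{s-1}[\zeta_1]$, which is a cocycle in $\Omega^*(E;Q)$ since $(s-1)q_0q_1^{s-2}=0$ for $s$ odd. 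This represents $q_1^{s-1}h_0$, a nonzero permanent cycle, and both $\oa_1$-freeness and the detecting class follow.

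Your proposal is cut off before the even case, and that is where the genuine difficulty lies. Two issues you will have to confront:
\begin{itemize}
\item For even $s\geq6$, computing a cocycle representative for $\oa_s=\delta(x^{s/2}/4s)$ directly from the definition (with $x=v_1^2-4v_1^{-1}v_2$) is unpleasant. The paper sidesteps this by passing to the mod~$2$ Moore spectrum, where the map between localized algebraic Novikov spectral sequences is injective at every page, and using the known mod~$2$ representative $v_1^{s-4}v_2[t_1]+v_1^{s-3}[t_2]+v_1^{s-3}[t_1^3]$. This has Novikov filtration $s-3$ and maps to $q_1^{s-4}q_2h_0$ in $H^*(E;Q/q_0)$.
\item The case $s=4$ is genuinely special and your degree-count strategy alone will not handle it. The mod~$2$ representative of $\oa_4$ is $[t_1^4]+v_2[t_1]+v_1[t_2]+v_1[t_1^3]+v_1^2[t_1^2]$, whose leading term $[t_1^4]$ has Novikov filtration~$0$: at $E_1$ it is detected by $h_2$, which is $h_0$-torsion (since $h_0^3h_2=0$). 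So the image in $h_0^{-1}E_1$ at filtration~$0$ is zero, and you must identify the filtration jump. The paper does this by multiplying by $\oa_1^3$, lifting a coboundary witness for $h_0^3h_2=0$ to $\Omega^*(BP_*BP)$, and using Lemma~\ref{t_1tensort_1} to show that the resulting representative of $\oa_1^3\oa_4$ has leading filtration-$1$ term $v_2[t_1|t_1|t_1|t_1]$, yielding $q_2h_0^4$.
\end{itemize}
Your degree-count observation is correct and confirms the answer, but it does not by itself establish $\oa_1$-freeness; for that you still need the explicit detection arguments above.
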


\begin{proof}
When $s$ is odd $\oa_s$ has a cocycle representative with leading term $sv_1^{s-1}[t_1]$ because $\eta_Rv_1=v_1+2t_1$. All other terms have the same filtration and involve higher powers of $t_1$. Thus, $\oa_s$ is detected by $q_1^{s-1}h_0$. 

Suppose $s$ is even and bigger than 2. Because the map induced by $\STop\rightarrow\STop/2$ between our localized algebraic Novikov spectral sequences is injective at each page, it suffices to check the result for the image of $\overline{\alpha}_s$ in $H^*(BP_*BP;BP_*/2)$. For $s>4$ one can find (see \cite[$4.4.35$]{ravenel2004complex}, for instance) an explicit cocycle representative for 
$\oa_s$:
\[
v_1^{s-4}v_2[t_1]+v_1^{s-3}[t_2]+v_1^{s-3}[t_1^3]\,.
\]
This element is detected by $q_1^{s-4}q_2h_0$ in the localized algebraic Novikov spectral sequence. 

For $s=4$ one finds, by direct computation, the following cocycle respresentative for $\oa_4$:
\[
[t_1^4]+v_2[t_1]+v_1[t_2]+v_1[t_1^3]+v_1^2[t_1^2]\,.
\]
Upon multiplying by $\oa_1^3$ we have leading term $[t_1^4|t_1|t_1|t_1]$. It is classical that $h_0^3h_2=0$ in $H^*(P)$. Find $y\in\Omega^3P$ with $dy=[\zeta_1^4|\zeta_1|\zeta_1|\zeta_1]$. Then obtain $y'\in\Omega^3(BP_*BP)$ by replacing $\zeta$'s by $t$'s. Using Lemma \ref{t_1tensort_1} we see that $dy'$ cannot contain $v_2[t_1|t_1|t_1|t_1]$. Thus, picking off the elements of filtration $1$ with only single powers of $t_1$'s appearing in
\[\bigg([t_1^4]+v_2[t_1]+v_1[t_2]+v_1[t_1^3]+v_1^2[t_1^2]\bigg)
\cdot[t_1|t_1|t_1]+dy'\]
gives $v_2[t_1|t_1|t_1|t_1]$. We deduce that $\oa_1^3\oa_4$ is detected by $q_2h_0^4$ in the localized algebraic Novikov spectral sequence.
\end{proof}

We can now obtain an explicit description of the localized Adams-Novikov $E_2$-page, in terms of elements which exist before localizing.

\begin{cor}
$\oa_1^{-1}H^*(BP_*BP)=\F_2[\oa_1^{\pm 1},\oa_3,\oa_4]/(\oa_4^2)$.
\end{cor}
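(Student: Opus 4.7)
The plan is to construct the ring map
\[
\phi:\F_2[\oa_1^{\pm 1},\oa_3,\oa_4]/(\oa_4^2)\longrightarrow \oa_1^{-1}H^*(BP_*BP)
\]
sending each generator to its namesake cohomology class, and prove it is an isomorphism by passing to associated gradeds in the algebraic Novikov filtration. Lemma \ref{alpha_1-free} says $\oa_1$, $\oa_3$, and $\oa_4$ have leading terms $h_0$, $q_1^2h_0$, and $q_2h_0$ in the $E_\infty$-page $\F_2[h_0^{\pm 1},q_1^2,q_2]/(q_2^2)$ from the preceding corollary. The induced map on associated gradeds therefore sends the monomial basis $\oa_1^a\oa_3^b\oa_4^c$ (with $a\in\Z$, $b\geq 0$, $c\in\{0,1\}$) of the source bijectively to the monomial basis $h_0^{a+b+c}q_1^{2b}q_2^c$ of $E_\infty$. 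Since the algebraic Novikov filtration on $\oa_1^{-1}H^*(BP_*BP)$ is finite in each tridegree (the convergence analysis of Section \ref{seclocanss}), a filtered ring map inducing an isomorphism on associated gradeds is itself an isomorphism. Thus it suffices to verify the single relation $\oa_4^2=0$ in the target.

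By Lemma \ref{alpha_1-free}, $\oa_4$ lives in filtration $1$, so $\oa_4^2$ has filtration at least $2$; its putative leading term $q_2^2h_0^2$ vanishes in $E_\infty$ because $q_2^2=0$, so in fact $\oa_4^2$ lies in filtration $\geq 3$. A straightforward enumeration of the monomials $h_0^aq_1^{2b}q_2^c$ of total bidegree $(s,u)=(2,16)$, which requires $a=2$ and $4b+6c=12$ with $c\in\{0,1\}$, shows that $E_\infty^{2,t,16}=0$ for every $t\neq 6$, while $E_\infty^{2,6,16}=\F_2\{h_0^2q_1^6\}$ is the class of $\oa_1^{-1}\oa_3^3$. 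Consequently $\oa_4^2$ is either $0$ or equal to $\oa_1^{-1}\oa_3^3$ in the target, and I have to eliminate the second possibility.

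Ruling out this hidden multiplicative extension is the main obstacle, since it cannot be read off the $E_\infty$-page by itself. My approach would be to multiply by a large power of $\oa_1$ so that the localization map on the relevant cohomology becomes an isomorphism---by the slope-$1/5$ range established in Section \ref{seclocane1}, $k\geq 4$ suffices for the bidegree of $\oa_1^k\oa_4^2$---and then carry out an explicit cobar computation using the representative of $\oa_1^3\oa_4$ supplied in the proof of Lemma \ref{alpha_1-free}. Squaring that representative and applying Lemma \ref{t_1tensort_1} in the same spirit as the bookkeeping in the proof of Proposition \ref{localg.nss} should exhibit the resulting cocycle as a coboundary, forcing $\oa_1^k\oa_4^2$ to vanish already in the unlocalized cohomology and hence $\oa_4^2=0$ in the localization.
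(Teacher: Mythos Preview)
Your overall architecture matches the paper's: build the ring map, then use the identification of the localized algebraic Novikov $E_\infty$-page together with the detection statements of Lemma~\ref{alpha_1-free} to see that the induced map on associated gradeds is an isomorphism, and conclude using local finiteness of the filtration. That part is fine and is exactly what the paper does.

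The gap is the relation $\oa_4^2=0$. You correctly isolate this as the only nontrivial input and correctly reduce it to excluding the single hidden extension $\oa_4^2=\oa_1^{-1}\oa_3^3$. But your proposed resolution---square an explicit cocycle for $\oa_1^3\oa_4$ and apply Lemma~\ref{t_1tensort_1} ``in the same spirit''---is not carried out, and it is not clear it would go through as stated: Lemma~\ref{t_1tensort_1} controls which monomials can contribute a $[t_1|t_1]$ term, which is exactly what one needs to read off leading terms in the algebraic Novikov filtration, but showing that an explicit cocycle is an honest coboundary in $\Omega^*(BP_*BP)$ is a different and much larger computation. As written, this step is a hope rather than an argument.

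The paper avoids this computation entirely by importing a topological fact: Toda's relation $\eta\sigma^2=0$ in $\pi_*(\STop)$ forces $\oa_1\oa_4^2=0$ already in the unlocalized $E_2(\STop;BP)$. Indeed $\oa_1\oa_4^2\in E_2^{3,18}$ is a permanent cycle (both $\oa_1$ and $\oa_4$ are), and it cannot be a target of any Adams--Novikov differential since the only possible source would lie in $E_2^{0,16}=0$; so if it were nonzero it would survive to detect $\eta\sigma^2\neq0$, a contradiction. This one-line topological input replaces your proposed cobar calculation and is the missing idea in your proof.
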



\begin{proof}
Consider the natural map 
$\Z_{(2)}[\oa_1,\oa_3,\oa_4]
\to H^*(BP_*BP)$. It can be checked that 
$\oa_1\oa_4^2=0$ in $H^*(BP_*BP)$; this follows
for example from Toda's relation $\eta\sigma^2=0$ \cite{toda1963composition} 
in $\pi_*(\STop)$.
We also have $2\oa_1=0$. This map thus factors through a map
\[
\Z_{(2)}[\oa_1,\oa_3,\oa_4]/(2\oa_1,\oa_1\oa_4^2)\to H^*(BP_*BP)\,.
\] 
Inverting $\oa_1$ gives a map 
$f:\F_2[\oa_1^{\pm 1},\oa_3,\oa_4]/
(\oa_1\oa_4^2)
\to\oa_1^{-1}H^*(BP_*BP)$.

We have shown that $\oa_3$ and $\oa_4$ have images in $\oa_1^{-1}H^*(BP_*BP)$ detected by $q_1^2h_0$ and $q_2h_0$, respectively. 
The $E_{\infty}$-page of the localized algebraic Novikov spectral sequence is $\F_2[h_0^{\pm 1},q_1^2,q_2]/(q_2)$; for each bidegree $(s,u)$ there is only one $t$ such that $E_{\infty}^{s,t,u}\neq 0$ and so the filtration is locally finite. 
These facts, together with convergence of the localized algebraic Novikov spectral sequence, allow one to check that $f$ is injective and surjective.
\end{proof}

\section{The localized motivic Adams-Novikov spectral sequence}
\label{secmot}

Here is our main result about motivic homotopy groups. We are working over an
algebraically closed base field of characteristic zero. 

\begin{thm}
\label{motivictheorem}
Let $\eta\in\pi_{1,1}(\SMot)$ and $\sigma\in\pi_{7,4}(\SMot)$ 
denote the elements of motivic
Hopf invariant 1, and $\mu_9$ the nonzero element in $\pi_{9,5}(\SMot)$. Then 
$2\eta=0$, $\sigma^2$ is $\eta$-torsion, and 
\[
\pi_{*,*}(\eta^{-1}\SMot)=\eta^{-1}\pi_{*,*}(\SMot)=
\F_2[\eta^{\pm 1},\sigma,\mu_9]/(\sigma^2)\,.
\]
\end{thm}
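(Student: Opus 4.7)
The plan is to run the $\oa_1$-localization of the motivic Adams--Novikov spectral sequence (MANSS) for $BPM$ developed by Hu--Kriz--Ormsby. First I would invoke Lemma \ref{eta-local-completion} to reduce to computing $\pi_{*,*}\bigl(\eta^{-1}(\SMot)^{\wedge}_2\bigr)$, where the MANSS is available. Combining the Hu--Kriz--Ormsby identification
\[
E_2(\SMot; BPM) = E_2(\STop; BP) \otimes_{\Z_2} \Z_2[\tau]
\]
with our main result \eqref{locANE_2}, and noting that $2\oa_1 = 0$ reduces $\Z_2$ to $\F_2$ after inverting $\oa_1$, the $E_2$-term of the localized MANSS becomes
\[
\oa_1^{-1} E_2(\SMot; BPM) = \F_2[\oa_1^{\pm 1}, \oa_3, \oa_4, \tau]/(\oa_4^2).
\]
Convergence to $\eta^{-1}\pi_{*,*}((\SMot)^{\wedge}_2)$ should follow as in the topological case of Section \ref{seclocanss}: $\oa_1$ acts parallel to the vanishing line of Lemma \ref{vanishingAdams}, and tensoring with $\F_2[\tau]$ (which shifts only the weight) does not disturb the filtration argument, so localization preserves convergence.

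Next I would compute the differentials. The only possible nontrivial differential is the motivic lift
\[
d_3 \oa_3 = \tau \oa_1^4
\]
of Novikov's classical differential \eqref{novd3}. The form of the right-hand side is forced by motivic weight ($\oa_3$ has weight $3$, $\oa_1^4$ has weight $4$, $\tau$ has weight $-1$) together with naturality under the topological realization $\tau \mapsto 1$, which must recover \eqref{novd3}. Given this differential, the Leibniz rule in characteristic $2$ yields $d_3(\oa_3^2) = 0$, so $\oa_3^2$ survives; meanwhile $d_3$ kills both $\tau$ (since $\oa_1$ is a unit) and $\oa_3$ itself. For bidegree reasons no further differentials can act on the resulting quotient algebra, so
\[
E_\infty = \F_2[\oa_1^{\pm 1}, \oa_3^2, \oa_4]/(\oa_4^2).
\]

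Finally I would match this with the claimed $\F_2[\eta^{\pm 1}, \sigma, \mu_9]/(\sigma^2)$. Tridegree matching gives $\oa_1 \leftrightarrow \eta$ and $\oa_4 \leftrightarrow \sigma$, while $\oa_3^2 \in E_\infty^{2, 12, 6}$ detects a nonzero element of $\pi_{10, 6}$ which must be $\eta\mu_9$; consequently $\mu_9$ is detected by $\oa_1^{-1}\oa_3^2$. Since $E_\infty$ is a free $\F_2[\oa_1^{\pm 1}]$-module on a monomial basis, there are no hidden additive extensions, and the relation $\sigma^2 = 0$ in the $\eta$-local ring comes from $\oa_4^2 = 0$ at $E_\infty$. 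The principal obstacle I anticipate is the rigorous derivation of the motivic differential $d_3 \oa_3 = \tau \oa_1^4$: although weight and topological realization essentially force its form, pinning down the coefficient of $\tau$ as exactly $1$ rather than $0$ still requires careful use of naturality of the comparison map between the MANSS and the classical ANSS, together with the observation that $\tau \oa_1^4$ is nonzero at $E_3$.
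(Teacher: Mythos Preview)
Your proposal is correct and follows essentially the same route as the paper: reduce to the $2$-complete case via Lemma~\ref{eta-local-completion}, identify the localized $E_2$-term using Hu--Kriz--Ormsby and \eqref{locANE_2}, run the single differential $d_3\oa_3=\tau\oa_1^4$, and read off the answer. The only cosmetic differences are that the paper names the surviving class $\oa_5$ rather than $\oa_1^{-1}\oa_3^2$ (these agree in the localization), and that the paper imports the relation $\eta\sigma^2=0$ from known motivic computations \cite{isaksen2014stable} rather than deducing $\sigma^2=0$ from the sparseness of $E_\infty$ as you do; both arguments are valid.
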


The first step is to move to the $2$-complete context. We owe the following
observation to the referee. 

\begin{lem}For any motivic spectrum, 
\[
\eta^{-1}X\to\eta^{-1}(X^\wedge_2)
\]
is an equivalence.  
\label{eta-local-completion}
\end{lem}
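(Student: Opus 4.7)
The plan is to study the cofiber sequence
\[
F \to X \to X^{\wedge}_2,
\]
where $F$ denotes the fiber of the $2$-completion map, and to show that $\eta^{-1}F \simeq 0$. Since inverting $\eta$ is a smashing localization---realized as the homotopy colimit along iterated multiplication by $\eta$---it preserves cofiber sequences, so the vanishing of $\eta^{-1}F$ will immediately yield the desired equivalence $\eta^{-1}X \xrightarrow{\sim} \eta^{-1}(X^{\wedge}_2)$.

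Two inputs feed into the argument, and neither is specifically motivic in flavor. First, by construction $X^{\wedge}_2$ is the $\SMot/2$-nilpotent completion of $X$, so the map $X \to X^{\wedge}_2$ is an $\SMot/2$-equivalence, meaning $F \wedge \SMot/2 \simeq 0$. The cofiber sequence $F \xrightarrow{2} F \to F \wedge \SMot/2$ then shows that multiplication by $2$ is an equivalence on $F$. This property persists under any smash product (in particular under the filtered colimit defining $\eta^{-1}(-)$), so multiplication by $2$ is also an equivalence on $\eta^{-1}F$.

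Second, Morel's computation \cite{morel} of $\pi_{n,n}(\SMot)$ as Milnor--Witt $K$-theory gives the relation $2\eta = 0$ in $\pi_{1,1}(\SMot)$ whenever the ground field contains $\sqrt{-1}$, hence in particular over $\CC$. Once $\eta$ is inverted, this relation forces multiplication by $2$ to vanish: $2 = (2\eta)\cdot\eta^{-1} = 0$. Thus $2$ acts by zero on every $\eta^{-1}\SMot$-module, and in particular on $\eta^{-1}F$.

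Combining these observations, $\eta^{-1}F$ is a motivic spectrum on which multiplication by $2$ is simultaneously zero and an equivalence; this forces $\eta^{-1}F \simeq 0$, and the cofiber sequence gives the lemma. The only non-formal ingredient is the vanishing of $2\eta$, which will not be an obstacle since it is already classical over $\CC$; everything else is a clean juxtaposition of two actions of $2$ on the same object.
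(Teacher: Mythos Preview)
Your argument is correct and rests on the same two ingredients the paper uses: the relation $2\eta=0$ and the fact that the completion map $X\to X^\wedge_2$ is an equivalence after smashing with $\SMot/2$. The packaging differs slightly. The paper argues via a splitting: since $2\eta=0$, multiplication by $2$ is null on $\eta^{-1}X$, so the cofiber sequence $\eta^{-1}X\xrightarrow{2}\eta^{-1}X\to\eta^{-1}(X/2)$ splits, exhibiting $\eta^{-1}X$ as a natural retract of $\eta^{-1}(X/2)$; then the map $\eta^{-1}X\to\eta^{-1}(X^\wedge_2)$ is a retract of the equivalence $\eta^{-1}(X/2)\to\eta^{-1}((X^\wedge_2)/2)$, hence itself an equivalence. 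You instead look at the fiber $F$ and observe that $2$ acts on $\eta^{-1}F$ both invertibly (since $F/2\simeq0$) and as zero (since $2\eta=0$), forcing $\eta^{-1}F\simeq0$. The two arguments are dual rephrasings of one another: yours isolates the obstruction in the fiber, the paper's isolates the desired map as a retract of a known equivalence. Neither has any real advantage over the other.
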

\begin{proof}
Since $2\eta=0$, there is a map 
$\eta^{-1}X/2\to\eta^{-1}X$, natural in the motivic spectrum $X$, that
splits the map induced by $X\to X/2$. On the other hand, the completion
map $X/2\to(X/2)^\wedge_2$ is an equivalence. The result follows. 
\end{proof}

We now briefly recall the theorems of Voevodsky 
\cite{hu2011remarks,voevodsky2003reduced} concerning mod $2$ motivic homology 
and the motivic Steenrod algebra over an algebraically closed field of 
characteristic $0$. 

Motivic homotopy and homology are graded by a free abelian group of rank two.
One pair of parameters is \emph{dimension} and \emph{weight} and with this 
bigrading $|\tau|=(0,-1)$. Other parameters are the \emph{coweight} 
(or ``Milnor-Witt degree'' \cite{dugger2015low}) and the Novikov or 
Chow \cite{dugger2013motivic} \emph{degree}; these satisfy the relations
\begin{center}
cowt $+$ wt = dim $\,,\quad$ cowt $-$ wt = deg\,.
\end{center}
Each of these parameters has its uses. For example, Morel showed that the 
motivic stable homotopy ring is zero in negative coweight, and given by the
Milnor-Witt $K$-theory (which he defined for the purpose) 
in coweight zero. Our bigrading will be by dimension and weight. 

The coefficient ring of mod 2 motivic homology, written $H$,
is $\mathbb{M}_2=\F_2[\tau]$. 

Hu, Kriz, and Ormsby \cite{hu2011remarks} (see also \cite{dugger2010motivic}) 
describe a ``motivic Adams-Novikov 
spectral sequence.'' To circumvent the fact that the full structure 
of the motivic Thom spectrum $MGL$ is unknown they work with the
$H$-completion. They show that 
this motivic spectrum splits as a wedge of suspensions of a motivic analogue
of the Brown-Peterson spectrum, denoted here by $BPM$, and use it to
construct a spectral sequence.

They show that the $H$-complete motivic analogue of the Hopf algebroid $(BP_*,BP_*BP)$ is simply the classical one tensored with $\Z_2[\tau]$ (where $\ZZ_2$ denotes the 2-adic integers). It follows that the $E_2$-page of the motivic Adams-Novikov spectral sequence is obtained from the classical one by completing at 2 and adjoining $\tau$, and that the corresponding algebraic Novikov spectral sequence is obtained by adjoining $\tau$. Thus, our work above has the following consequence.

\begin{cor}
Over an algebraically closed field of characteristic zero, the $H$-complete 
motivic Adams-Novikov $E_2$-page localizes to 
$\overline{\alpha}_1^{-1}E_2=
\F_2[\tau,\overline{\alpha}_1^{\pm 1},\overline{\alpha}_3,\overline{\alpha}_4]/
(\overline{\alpha}_4^2)$.
\end{cor}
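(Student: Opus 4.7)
The plan is to reduce directly to the classical $\oa_1$-localization already established. By the Hu--Kriz--Ormsby result recalled immediately before the corollary, the $H$-complete motivic Adams-Novikov $E_2$-page over an algebraically closed field of characteristic zero is
\[
E_2(\SMot; BPM) = E_2(\STop; BP) \otimes_{\Z_{(2)}} \Z_2[\tau],
\]
with the element $\tau$ in bidegree $(0,-1)$ acting as a central polynomial variable that does not interact with the internal structure of the classical $E_2$-page. The same structural observation shows that the motivic algebraic Novikov spectral sequence is just the classical one tensored with $\Z_2[\tau]$ throughout.

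Next, I would exploit the fact that $2 \oa_1 = 0$ already in the classical $E_2$-page, so that inverting $\oa_1$ automatically forces $2 = 0$ in the localization. Consequently, base change along $\Z_{(2)} \to \Z_2[\tau]$ collapses after $\oa_1$-inversion to base change along $\F_2 \to \F_2[\tau]$, yielding
\[
\oa_1^{-1} E_2(\SMot; BPM) \;=\; \bigl(\oa_1^{-1} E_2(\STop; BP)\bigr) \otimes_{\F_2} \F_2[\tau].
\]
Substituting the classical computation $\oa_1^{-1} E_2(\STop; BP) = \F_2[\oa_1^{\pm 1}, \oa_3, \oa_4]/(\oa_4^2)$ established in the previous section then gives exactly $\F_2[\tau, \oa_1^{\pm 1}, \oa_3, \oa_4]/(\oa_4^2)$, as claimed.

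The main subtlety, and the one place I would want to be careful, is the interchange of $\oa_1$-localization with the polynomial extension by $\tau$. This is fine because $\tau$ is central and a non-zero-divisor, but rather than relying on an abstract argument I would prefer the concrete route: rerun the localized algebraic Novikov spectral sequence of Section \ref{seclocanssdiff} with coefficients in $\Z_2[\tau]$. Its $E_1$-page becomes $\F_2[h_0^{\pm 1}, q_1^2, q_2, q_3, \ldots][\tau]$ by Corollary \ref{E_1loc} tensored with $\F_2[\tau]$, its convergence follows from the same vanishing line (which is $\tau$-linear), and the differentials $d_1 q_{n+1} = q_n^2 h_0$ for $n \geq 2$ from Proposition \ref{localg.nss} are inherited verbatim since $\tau$ plays no role in the formal-group input. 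Passing to $E_\infty$ gives $\F_2[\tau, h_0^{\pm 1}, q_1^2, q_2]/(q_2^2)$, and the identification of $\oa_3$ and $\oa_4$ with generators detected by $q_1^2 h_0$ and $q_2 h_0$ from Lemma \ref{alpha_1-free} produces the stated description of the localized motivic $E_2$-page.
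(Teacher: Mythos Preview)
Your proof is correct and follows the same approach as the paper: the corollary is stated without a separate proof, the paper simply noting that since the $H$-complete motivic $E_2$-page is the classical one tensored with $\Z_2[\tau]$ (and the motivic algebraic Novikov spectral sequence is the classical one with $\tau$ adjoined), the result follows immediately from the classical localization computed earlier. Your additional care in justifying the interchange of localization with base change, and in spelling out the $\tau$-extended algebraic Novikov spectral sequence, is more detailed than what the paper provides but is exactly the content implicit in the paper's one-line deduction.
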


Dugger and Isaksen observe that the motivic Adams-Novikov spectral sequence
converges to $\pi_*((\SMot)^{\wedge}_{H})$. 
In \cite{hu2011convergence} this mod 2 homology completion is 
identified with the 2-adic completion $(\SMot{})^\wedge_2$; 
so the $H$-completed motivic Adams-Novikov spectral sequence has the form
\[
E_2^{s,u,w}=H^*(BP_*BP)\otimes\ZZ_2[\tau]^{s,u,w}
\overset{s}{\implies}
\pi_{u-s,w}((\SMot)^{\wedge}_2)\,,\quad
d_r:E_r^{s,u,w}\to E_r^{s+r,u+r-1,w}\,.
\]
If $x\in H^{s,u}(BP_*BP)$ is nonzero then $u$ is even and $\tau^nx$ defines an element of $H^*(BP_*BP)[\tau]^{s,u,u/2-n}$. We can recover the classical Adams-Novikov spectral sequence by forgetting the weight and setting $\tau=1$.

Inverting $\eta$ does not harm convergence of this spectral sequence, since
powers of $\oa_1$ constitute the vanishing line at $E_2$; 
it converges to $\eta^{-1}\pi_*((\SMot)^\wedge_2)$. 

The classical differential $d_3\overline{\alpha}_3=\overline{\alpha}_1^4$ 
appears motivically as 
\[
d_3\overline{\alpha}_3=\tau\overline{\alpha}_1^4\,.
\]
When we invert $\oa_1$, this differential has the effect of killing $\tau$, 
and using the relations above we find that 
\[
E_\infty=\F_2[\overline{\alpha}_1^{\pm 1},\overline{\alpha}_3^2,\overline{\alpha}_4]/
(\overline{\alpha}_1\overline{\alpha}_4^2)=
\F_2[\overline{\alpha}_1^{\pm 1},\overline{\alpha}_4,\overline{\alpha}_5]/
(\overline{\alpha}_4^2)\,.
\]

To see what this implies about motivic homotopy groups, note that 
classically $\eta$, $\sigma$, $\mu_9$ and $\eta\mu_9=\mu_{10}$ are detected by $\overline{\alpha}_1$, $\overline{\alpha}_4$, $\overline{\alpha}_5$ and $\overline{\alpha}_1\overline{\alpha}_5=\overline{\alpha}_3^2$, respectively, in the Adams-Novikov spectral sequence. These facts hold motivically as well
\cite{hornbostel}, and
the relation $\eta\sigma^2=0$ is true motivically also in the 2-complete sphere
\cite{isaksen2014stable}. So we receive a map 
\[
\F_2[\eta,\sigma,\mu_9]/(2\eta,\eta\sigma^2)\to\pi_{*,*}((\SMot))^\wedge_2\,.
\]

Now $\eta$, $\sigma$, and $\mu_9$ are detected by $\oa_1$, $\oa_4$, and
$\oa_5$ respectively. Convergence of the $\eta$-localized $H$-completed
motivic Adams-Novikov spectral sequence then shows that 
\[
\F_2[\eta^{\pm1},\sigma,\mu_9]/(2\eta,\eta\sigma^2)\to
\pi_{*,*}((\SMot)^\wedge_2)
\]
is an isomorphism.

This completes the proof of Theorem \ref{motivictheorem}.

\numberwithin{equation}{subsection}
\section{A comparison of spectral sequences}
\subsection{The diagram}
In this final section, we complete the calculation of a square of spectral sequences, a localized version of the following square.
\[\xymatrixcolsep{100pt}\xymatrix{
H^*(P;Q)[\tau]\ar@{=>}[r]^{\text{CESS}}\ar@{=>}[d]_-{\text{ANSS}[\tau]}& 
E_2(\SMot;H)\ar@{=>}[d]^{\text{MASS}}\\
E_2(\SMot;BPM)\ar@{=>}[r]^{\text{MNSS}}& \pi_{*,*}(\SMot)
}\]
The right spectral sequence is the motivic Adams spectral sequence as studied in \cite{dugger2010motivic,hu2011remarks}. The bottom spectral sequence is the motivic Adams-Novikov spectral sequence described above, which was first studied in \cite{hu2011remarks}. The left spectral sequence is the \emph{motivic algebraic Novikov spectral sequence}, obtained by filtering $\pi_*(BPM)=BP_*[\tau]$ by powers of the kernel of the augmentation $\pi_*(BPM)\to\F_2[\tau]$. By the results of Hu, Kriz, and Ormsby \cite{hu2011remarks} this is simply the algebraic Novikov spectral sequence described in Section 
\ref{secanss} extended by adjoining $\tau$. The grading of $H^*(P;Q)[\tau]$ follows that of $H^*(BP_*BP)[\tau]$. If $x\in H^{s,u}(P;Q^t)$ is nonzero then $u$ is even and $\tau^nx$ defines an element of $H^*(P;Q)[\tau]^{s,t,u,u/2-n}$. The top spectral sequence is the Cartan-Eilenberg spectral sequence associated to the extension of Hopf algebras
\begin{align}\label{extHopf}
\mathbb{M}_2\otimes P\to A_{\text{Mot}}\to\mathbb{M}_2\otimes E.
\end{align}
This motivic Cartan-Eilenberg spectral sequence is indexed just as in \eqref{CESS}, but with the additional weight grading that is preserved by differentials. The vanishing lines of \eqref{vanishingAdamsNov} and \eqref{vanishingAdams} ensure that we can localize all the spectral sequences to obtain a square of convergent spectral sequences. The behavior of these spectral sequences is summarized in the following diagram.
\[\xymatrixcolsep{100pt}\xymatrix{
\F_2[\tau,h_0^{\pm 1},q_1^2,q_2,q_3,\ldots]\ar@{=>}[r]^{d_3q_1^2=\tau h_0^3}
\ar@{=>}[d]_-{d_1q_{n+1}=q_n^2h_0,\ n\geq 2}& 
\F_2[h_0^{\pm 1},v_1^4,v_2,v_3,\ldots]
\ar@{=>}[d]^-{d_2v_{n+1}\equiv v_n^2h_0,\ n\geq 2}\\
\F_2[\tau,\overline{\alpha}_1^{\pm 1},\overline{\alpha}_3,\overline{\alpha}_4]/
(\overline{\alpha}_1\overline{\alpha}_4^2)
\ar@{=>}[r]^{d_3\overline{\alpha}_3=\tau\overline{\alpha}_1^4}& 
\F_2[\eta^{\pm 1},\sigma,\mu_9]/(\eta\sigma^2)
}\]
We have calculated the left spectral sequence and the bottom one in the earlier sections of this paper. Guillou and Isaksen calculated the $E_2$-page of the localized motivic Adams spectral sequence in \cite{guillou2014eta}. In the next section, we will give a different proof of their result by calculating the top spectral sequence. In the final section, we will use the techniques of \cite{miller1981relations} to determine the differentials in the localized motivic Adams spectral sequence, verifying another conjecture of Guillou and Isaksen \cite{guillou2014eta}.

\subsection{The localized Cartan-Eilenberg spectral sequence}
The extension of Hopf algebras \eqref{extHopf} gives rise to a Cartan-Eilenberg spectral sequence, which we may localize by inverting 
$h_0\in H(P;Q)[\tau]^{1,0,2,1}$.

\begin{lem}
In the localized Cartan-Eilenberg spectral sequence we have 
$d_3q_1^2=\tau h_0^3$. The classes $q_1^4$ and $q_n$ for $n\geq 2$ are permanent cycles and so 
\[E_{\infty}=\F_2[h_0^{\pm 1},q_1^4,q_2,q_3,\ldots].\]
\end{lem}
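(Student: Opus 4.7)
Corollary \ref{E_1loc} tensored with $\F_2[\tau]$ gives the $E_2$-page $\F_2[\tau, h_0^{\pm 1}, q_1^2, q_2, q_3, \ldots]$. Granted the stated $d_3$ differential, $q_1^2$ dies as a source and $\tau$ dies as an $h_0^{-3}$-multiple of the target (since $h_0$ is a unit after localization); combined with the permanent cycle claims for $q_1^4$ and for $q_n$ with $n \geq 2$, this yields the asserted $E_\infty$. The permanent cycle statements are tridegree-theoretic: a differential $d_r$ shifts $(s,t,u)$ by $(r, 1-r, r-1)$. For $q_n \in E_r^{0, 1, 2(2^n - 1)}$ with $n \geq 2$, the $d_2$-target $E_2^{2, 0, 2^{n+1} - 1}$ has odd internal degree, whereas $H^*(P; Q^t)$ is concentrated in even internal degree; for $r \geq 3$ the target has $t = 2 - r < 0$. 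For $q_1^4 = (q_1^2)^2$, Leibniz and characteristic $2$ give $d_3 q_1^4 = 2 q_1^2 \cdot \tau h_0^3 = 0$, and higher $d_r$ vanish by similar parity/negativity considerations on the target tridegree.

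The main task is the differential $d_3 q_1^2 = \tau h_0^3$. By tridegree, the only potentially nonzero differentials on $q_1^2$ are $d_2$ and $d_3$. The target of $d_2 q_1^2$ lies in $E_2^{2, 1, 5}$, which vanishes by the same parity reasoning as above. The target of $d_3 q_1^2$ lies in $E_3^{3, 0, 6} \subseteq H^{3, 6}(P; \F_2)[\tau] = \F_2\{h_0^3\}[\tau]$; a weight check ($q_1^2$ has weight $2$, while $h_0^3$ has default weight $3$) pins the target down to $\F_2\{\tau h_0^3\}$. Hence $d_3 q_1^2 \in \{0, \tau h_0^3\}$. To exclude zero I adapt the lifting technique of Proposition \ref{localg.nss}. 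I lift the cocycle
\[
c = q_1^2[\zeta_1] + q_0 q_1[\zeta_1^2] + q_0^2[\zeta_1^3]
\]
representing $q_1^2 h_0$ (Corollary \ref{permcycles}) to a cochain $\tilde c \in \Omega^1(A_{\text{Mot}})$ along the surjection induced by the associated graded identification \eqref{assgraded}, and compute $d\tilde c$ modulo Cartan-Eilenberg filtration $\geq 5$. The motivic Steenrod coproduct on $A_{\text{Mot}}$, which over $\C$ involves relations of the form $\bar\tau_i^2 = \tau \bar\xi_{i+1}$ absent classically, contributes a $\tau$-linear term at Cartan-Eilenberg filtration $4$. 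A direct computation extracts this as $\tau[\zeta_1|\zeta_1|\zeta_1|\zeta_1]$, representing $\tau h_0^4 = d_3(q_1^2 h_0)$; dividing by $h_0$ gives $d_3 q_1^2 = \tau h_0^3$.

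The main obstacle is this motivic cobar computation: constructing a clean lift $\tilde c$, performing the cobar differential in $\Omega^*(A_{\text{Mot}})$, and isolating the Cartan-Eilenberg filtration-$4$ component while verifying that the $\tau$-multiple term is not cancelled by other contributions. Because the tridegree and weight constraints force the answer as soon as any nonzero $\tau$-term survives at the correct spot, the essential content is to exhibit such a surviving term coming from the motivic coproduct; this is analogous to the calculation in the proof of Proposition \ref{localg.nss}, but using the motivic Steenrod algebra in place of $BP$.
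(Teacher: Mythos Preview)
Your degree arguments for the permanent cycles $q_1^4$ and $q_n$ ($n\geq2$) are correct and match the paper's ``degree considerations.'' Likewise, your analysis narrowing $d_3q_1^2$ to $\{0,\tau h_0^3\}$ is fine.

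The gap is in the step that is supposed to exclude $0$. Your proposed cobar computation is not carried out, and as sketched it contains errors that would have to be fixed before it could work. First, the cocycle $c$ lives in $\Omega^1(P;Q^2)$, so it has total cohomological degree $s+t=3$; a lift to the cobar complex of $A_{\text{Mot}}$ lands in $\Omega^3(A_{\text{Mot}})$, not $\Omega^1(A_{\text{Mot}})$. Second, the identification \eqref{assgraded} you cite is the associated graded for the \emph{algebraic Novikov} filtration on $\Omega^*(BP_*BP)$, not the Cartan--Eilenberg filtration on $\Omega^*(A_{\text{Mot}})$; these are different filtrations on different complexes. Third, and most importantly, computing a $d_3$ is not a single ``lift and differentiate'' as in the $d_1$ computation of Proposition~\ref{localg.nss}: you must choose (and adjust) the lift $\tilde c\in F^1\Omega^3(A_{\text{Mot}})$ so that $d\tilde c$ lands in $F^4$, and only then read off the class of $d\tilde c$. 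You assert the outcome $\tau[\zeta_1|\zeta_1|\zeta_1|\zeta_1]$ but do not produce such a lift or verify that the $\tau$-term survives the necessary corrections.

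The paper's argument sidesteps all of this with a top-down observation: the unlocalized differential $d_3\langle h_1,q_0^2,h_0\rangle=\tau h_0^4$ is forced by what is already known about the abutment $H^*(A_{\text{Mot}})$ in low degrees. Since the Massey product (which has tridegree $(s,t,u)=(1,2,6)$, weight $3$) cannot be hit by any differential and cannot survive to detect anything in $H^{3,8,3}(A_{\text{Mot}})$, it must support a differential; the only available target is $\tau h_0^4$. Dividing by the unit $h_0$ gives the localized statement. This avoids any explicit cobar manipulation.
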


\begin{proof}
The differential $d_3q_1^2=\tau h_0^3$ follows from the unlocalized differential $d_3\langle h_1,q_0^2,h_0\rangle=\tau h_0^4$ and this is forced on us by our limited knowledge of $H^*(A_{\text{Mot}})$. Degree considerations show that $q_1^4$, and $q_n$ for $n\geq 2$, are permanent cycles.
\end{proof}

We can now prove the following result, established also by 
Guillou and Isaksen \cite{guillou2014eta}. We note that they follow the classical conventions at $p=2$ and denote by $h_1$ the class that we call 
$h_0\in E_2^{1,2,1}(\SMot;H)$.

\begin{cor}
There exist classes $v_1^4,v_2,\ldots$, with $|v_1^4|=(0,8,4), |v_n|=(0,2(2^n-1),2^n-1)$, such that
\[
h_0^{-1}E_2(\SMot;H)=\F_2[h_0^{\pm 1},v_1^4,v_2,v_3,\ldots]\,.
\]

\end{cor}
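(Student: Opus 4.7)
The plan is to promote the $E_\infty$ calculation of the preceding lemma to a presentation of the abutment $h_0^{-1}E_2(\SMot;H)$. The localized Cartan--Eilenberg spectral sequence converges, because the vanishing lines of Lemmas \ref{vanishingAdamsNov} and \ref{vanishingAdams} survive the adjunction of $\tau$, and $h_0$ acts parallel to them, so inversion does no harm. The preceding lemma gives
\[
E_\infty = \F_2[h_0^{\pm 1},q_1^4,q_2,q_3,\ldots],
\]
a polynomial algebra, and the task is now to lift this associated-graded description to the abutment.

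The first step is to choose lifts. For each $n\geq 2$, since $q_n$ is a permanent cycle in the localized CESS, it detects a class, which I call $v_n$, in $h_0^{-1}E_2(\SMot;H)$; similarly choose $v_1^4$ detecting $q_1^4$. The tridegrees in $(s,u,w)$ are dictated by the CESS convergence $H^{s,u}(P;Q^t)\Rightarrow E_2^{s+t,u+t}(\SMot;H)$ with motivic weight $w=u/2$ as set up in the paper's grading conventions; after multiplying by appropriate powers of $h_0$ (which is now a unit) the representatives land at the bigradings stated in the corollary.

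These lifts then determine a ring homomorphism
\[
\Phi\colon \F_2[h_0^{\pm 1},v_1^4,v_2,v_3,\ldots]\to h_0^{-1}E_2(\SMot;H).
\]
By construction $\Phi$ respects the CESS filtration, and on the associated graded it recovers precisely the identification with $E_\infty$ provided by the preceding lemma. Since that associated-graded map is an isomorphism, to finish it suffices to check that $\Phi$ is an isomorphism in each tridegree; equivalently, that the CESS filtration on $h_0^{-1}E_2(\SMot;H)$ is finite (hence automatically complete and Hausdorff) in each tridegree. This is again immediate from the vanishing lines of Lemmas \ref{vanishingAdamsNov} and \ref{vanishingAdams}, which bound the nonzero filtration quotients to a finite range in each fixed $(s,u,w)$.

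The main obstacle is essentially bookkeeping: organizing the lifts consistently and tracking the tridegrees under inversion of $h_0$. All genuine structural content---the polynomial description of $E_\infty$, the $d_3$ killing $\tau$, and the vanishing lines ensuring both convergence and the finiteness of filtrations---has been established in the earlier lemmas, so no further spectral-sequence machinery is required. The argument reduces to the standard principle that a map of complete filtered algebras inducing an isomorphism on associated gradeds is itself an isomorphism.
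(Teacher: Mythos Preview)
Your proposal is correct and follows essentially the same approach as the paper: choose representatives $v_1^4$ and $v_n$ for the permanent cycles $q_1^4$ and $q_n$, and observe that since the associated graded (the $E_\infty$-page) is free polynomial on these classes, the abutment is as well. The paper's proof is simply a terser version of yours, invoking the freeness of the associated graded without spelling out the filtered-algebra argument; your added discussion of convergence and finiteness of the filtration is accurate but not strictly needed beyond what the paper already established.
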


\begin{proof}
We choose a representative for $q_1^4$, which we call $v_1^4$, and for 
$n\geq 2$ we choose representatives for $q_n$, which we call $v_n$. Since the associated graded algebra is free on the classes of these generators, the result follows.
\end{proof}

\subsection{Comparing Adams spectral sequences}

In this section we will complete the calculation of the localized motivic 
Adams spectral sequence. 
By finding representatives, one sees that in the localized motivic Adams spectral sequence for the $\eta$-local sphere spectrum the elements $v_1^4$ and $v_2$ are permanent cycles. 
For the other generators, we have the following proposition, which follows from the techniques of \cite{miller1981relations}.

\begin{prop}\label{MASSdiff}
For $n\geq 2$, we have $d_2v_{n+1}=v_n^2h_0$ modulo 
higher Cartan-Eilenberg filtration.
\end{prop}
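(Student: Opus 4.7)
The plan is to apply Theorem \ref{thmcomparison}, the motivic Adams/algebraic-Novikov comparison established earlier in this section (an improvement of the scheme in \cite{miller1981relations}), in order to transport the algebraic Novikov differential $d_1 q_{n+1} = q_n^2 h_0$ of Proposition \ref{localg.nss} into the localized motivic Adams spectral sequence. The degree bookkeeping matches up cleanly: a cocycle $x\in H^{s,u}(P;Q^t)$ that survives the motivic Cartan--Eilenberg spectral sequence represents a class of Adams filtration $s+t$ in $E_2(\SMot;H)$, and an algebraic Novikov $d_r$-differential shifts $(s,t)\mapsto(s+1,t+r)$, hence raises Adams filtration by $r+1$. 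Thus an algebraic Novikov $d_1$-differential corresponds to a $d_2$ in the motivic Adams spectral sequence, exactly the target of the proposition.

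Applied to $x=q_{n+1}\in H^{0,*}(P;Q^1)$, whose Cartan--Eilenberg representative on the motivic Adams $E_2$-page is (by the previous lemma) the chosen class $v_{n+1}$, the relation $d_1 q_{n+1} = q_n^2 h_0$ translates, modulo strictly higher Novikov (equivalently, higher Cartan--Eilenberg) filtration, into $d_2 v_{n+1} \equiv v_n^2 h_0$ in the motivic Adams spectral sequence. Since $h_0$ is a permanent cycle detecting $\eta$, passing to the $h_0$-localization preserves this differential, giving the stated differential in the localized motivic Adams spectral sequence. The $\tau$-bookkeeping is trivial here: the source and target both sit in the $\tau$-free part of the computation (both $q_{n+1}$ and $q_n^2h_0$ have even internal degree with the canonical $\tau$-exponent), so no corrective factor of $\tau$ appears.

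The main obstacle is the content of Theorem \ref{thmcomparison} itself: one must show that a cochain in the motivic $A_{\textup{Mot}}$-cobar complex representing $v_{n+1}$ can be lifted, through the augmentation-ideal filtration defining the algebraic Novikov spectral sequence, to a cochain in the motivic $BPM$-cobar complex whose coboundary, reduced to the associated graded $\Omega^*(P;Q^2)$, represents $q_n^2 h_0$. This is precisely the cochain-level compatibility that the comparison theorem guarantees: the chosen lift exists because $v_{n+1}$ was selected as a Cartan--Eilenberg representative of $q_{n+1}$, and its $BPM$-cobar coboundary lies in the next filtration layer by construction, with leading term computed by the algebraic Novikov differential. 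With Theorem \ref{thmcomparison} in hand, the calculation from Section \ref{seclocanssdiff} feeds in directly and the proposition follows.
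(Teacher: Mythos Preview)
Your proposal is correct and takes essentially the same approach as the paper: invoke Theorem~\ref{thmcomparison} (the comparison theorem) to transport the algebraic Novikov differential $d_1^{\mathrm{AN}}q_{n+1}=q_n^2h_0$ from Proposition~\ref{localg.nss} into the motivic Adams spectral sequence as $d_2^{H}v_{n+1}\equiv v_n^2h_0$ modulo higher Cartan--Eilenberg filtration. One small notational slip: in the statement of Theorem~\ref{thmcomparison} the symbol $x$ denotes the Adams $E_2$-class (here $v_{n+1}$) while $a$ denotes the element of $H^*(P;Q)$ detecting it (here $q_{n+1}$); you have written ``$x=q_{n+1}$'' where you mean $a=q_{n+1}$, $x=v_{n+1}$, but the argument is unaffected.
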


We will give an improvement, due to the first author, of the statement and the proof of the comparison result of \cite{miller1981relations} (which, in turn, followed ideas from \cite{novikov1967methods}).
The second author is eager to use this opportunity to clarify the proof given in \cite{miller1981relations}, and to fill a gap: Lemma $6.7$ is not correct as stated there. 
What follows is a correct statement that serves the purpose in \cite{miller1981relations}, and which will be used in the proof presented here as well. 
This lemma relates to the comparison of two boundary maps, and its importance cannot be overstated. 
It deals with the following situation.
Let $A\to B\to C$ and $X\to Y\to Z$ be cofiber sequences. Smash them together
to form the following commutative diagram of cofiber sequences.
\[
\xymatrix{
A\wedge X \ar[d] \ar[r] & A\wedge Y \ar[d] \ar[r] & A\wedge Z \ar[d] \\
B\wedge X \ar[d] \ar[r] & B\wedge Y \ar[d] \ar[r] & B\wedge Z \ar[d] \\
C\wedge X        \ar[r] & C\wedge Y        \ar[r] & C\wedge Z
}\]
Let $b$ be an element of $\pi_n(B\wedge Y)$ that maps to $0$ in 
$\pi_n(C\wedge Z)$. 
Then there is an element $a\in\pi_n(A\wedge Z)$ mapping to the image of $b$ in $\pi_n(B\wedge Z)$, and an element $c\in\pi_n(C\wedge X)$ mapping to the image
of $b$ in $\pi_n(C\wedge Y)$. 

\begin{lem}[May \cite{may2001additivity}]\label{lemma-may}
The elements $a$ and $c$ can be chosen so that they have the same 
image (up to a conventional sign) in $\pi_{n-1}(A\wedge X)$ under the 
boundary maps associated to the 
cofiber sequences along the top and the left edge of the diagram.
\end{lem}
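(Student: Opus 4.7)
The plan is to realize both lifts $a$ and $c$ simultaneously as images of a single lift of $b$ through an intermediate spectrum, and then read off the coincidence of boundary maps from naturality applied to a single cofiber sequence whose connecting map encodes both $\partial_{\mathrm{top}}$ and $\partial_{\mathrm{left}}$.

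Concretely, form the (homotopy) pushout
\[
P \;=\; (A\wedge Y)\cup_{A\wedge X}(B\wedge X).
\]
The classical $3\times 3$ lemma (or a direct calculation of iterated cofibers) gives two cofiber sequences that together govern the situation:
\[
A\wedge X \;\to\; P \;\to\; (A\wedge Z)\vee(C\wedge X),
\qquad
P \;\to\; B\wedge Y \;\to\; C\wedge Z.
\]
The first identifies the cofiber of $A\wedge X\to P$ as $(A\wedge Y/A\wedge X)\vee(B\wedge X/A\wedge X)=A\wedge Z\vee C\wedge X$, and the second identifies the cofiber of $P\to B\wedge Y$ as $(B/A)\wedge(Y/X)=C\wedge Z$.

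Using the second sequence, the hypothesis that $b\in\pi_n(B\wedge Y)$ maps to $0$ in $\pi_n(C\wedge Z)$ lets me lift $b$ to some $\tilde b\in\pi_n(P)$. Define $a\in\pi_n(A\wedge Z)$ and $c\in\pi_n(C\wedge X)$ to be the images of $\tilde b$ under the canonical maps $P\to A\wedge Z$ and $P\to C\wedge X$ induced by collapsing $B\wedge X$ and $A\wedge Y$ respectively. Chasing the diagram shows these are lifts of the images of $b$ in $\pi_n(B\wedge Z)$ and $\pi_n(C\wedge Y)$ as required. Now the image of $\tilde b$ in $\pi_n(A\wedge Z\vee C\wedge X)$ is exactly $(a,c)$, so by exactness of the first cofiber sequence above, $(a,c)$ is killed by the connecting map $\partial\colon\pi_n(A\wedge Z\vee C\wedge X)\to\pi_{n-1}(A\wedge X)$; that is, the two components $\partial|_{A\wedge Z}(a)$ and $\partial|_{C\wedge X}(c)$ sum to zero in $\pi_{n-1}(A\wedge X)$.

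It remains to identify these two component boundaries with $\partial_{\mathrm{top}}$ and $\partial_{\mathrm{left}}$. For the first component, the inclusions of cofiber sequences
\[
\xymatrix@R=14pt{
A\wedge X \ar[r]\ar@{=}[d] & A\wedge Y \ar[r]\ar[d] & A\wedge Z \ar[d] \\
A\wedge X \ar[r]           & P         \ar[r]       & A\wedge Z \vee C\wedge X
}
\]
commute, so naturality of connecting maps equates $\partial|_{A\wedge Z}$ with $\partial_{\mathrm{top}}$. An entirely symmetric argument using $B\wedge X\hookrightarrow P$ identifies $\partial|_{C\wedge X}$ with $\partial_{\mathrm{left}}$ (with a sign depending on the orientation chosen for the pushout). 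Therefore $\partial_{\mathrm{top}}(a)=\pm\partial_{\mathrm{left}}(c)$, as claimed.

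The main obstacle is purely bookkeeping: getting the sign right, and checking that the two induced maps $P\to A\wedge Z$ and $P\to C\wedge X$ really do exhibit the cofiber of $A\wedge X\to P$ as the wedge $A\wedge Z\vee C\wedge X$ (rather than merely mapping to it). No deep input beyond the $3\times 3$ lemma and naturality of the triangulation is needed; the proof hinges on replacing the two separate lifting problems by the single lifting problem through $P$.
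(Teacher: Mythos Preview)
Your argument is correct. The paper itself does not prove this lemma; it attributes the result to May and remarks that Verdier's extension of a $2\times2$ square to a $3\times3$ diagram of cofiber sequences already produces exactly this structure. Your homotopy pushout $P=(A\wedge Y)\cup_{A\wedge X}(B\wedge X)$ is precisely the object Verdier's construction introduces, and your two cofiber sequences are among the triangles his argument yields, so you have written out a self-contained version of the proof the paper defers to the literature. The one point you flag as bookkeeping (that the cofiber of $A\wedge X\to P$ really is the wedge rather than merely an extension) is most cleanly handled by noting that in a stable category the pushout square is also a pullback, so $A\wedge X$ is the fiber of the product map $P\to(A\wedge Z)\times(C\wedge X)=(A\wedge Z)\vee(C\wedge X)$; rotating that fiber sequence gives the cofiber sequence you need directly, with no splitting left to verify.
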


This statement is a small part of an elaborate structure enriching the 
displayed $3\times 3$ diagram. 
This structure is described in detail and proved by May in \cite{may2001additivity}. 
In the founding days of the theory of triangulated categories, Verdier \cite{deligne1983faisceaux} showed that a $2\times2$ diagram can always be extended to a $3\times3$ diagram of cofiber sequences. 
An analysis of his proof reveals that it actually produces precisely the structure verified by May for the specific case in which the $3\times3$ diagram occurs by smashing together two cofiber sequences. 

For clarity, we will work in the non-motivic context, and in the specific case of $BP$ and $H\F_p$ (for any prime $p$) and the sphere spectrum. 
We will then indicate the general setting under which the result holds and this will prove the proposition just stated. 
Write $H$ for the mod $p$ Eilenberg Mac Lane spectrum. 

So we have the following square of spectral sequences. 
\[\xymatrixrowsep{38pt}\xymatrix{
H^{s,u}(P;Q^t) \ar@{=>}[rr]^{\text{CESS}}
\ar@{=>}[d]_{\text{ANSS}} 
&& E_2^{s+t,u+t}(\STop;H)\ar@{=>}[d]^{\text{ASS}} \\
E_2^{s,u}(\STop;BP)\ar@{=>}[rr]^{\text{NSS}}&& \pi_{u-s}(\STop)
}\]
The initial two are
the algebraic Novikov spectral sequence \eqref{alg.nss} and the Cartan-Eilenberg spectral sequence \eqref{CESS}; the final two are the ($H$-based) Adam 
spectral sequence and the Novikov or $BP$-based Adams, spectral sequence.
Write $d^H_r$ for the Adams differentials, and $d^{AN}_r$ for the differentials
in the algebraic Novikov spectral sequence.

\begin{thm}
Suppose $x\in F^s_{\text{CE}}E_2^{s+t,u+t}(\STop;H)$.
Then the Cartan-Eilenberg filtration of $d_2^{\text{H}}x$ is higher:
\[
d_2^{\text{H}}x\in F^{s+1}_{\text{CE}}E_2^{s+t+2,u+t+1}(\STop;H)\,.
\] 
Moreover, if $x$ is detected in the Cartan-Eilenberg spectral sequence
by $a\in H^{s,u}(P;Q^t)$ then
$d_2^{\text{H}}x$ is detected by 
$d_1^{\text{AN}}a\in H^{s+1,u}(P;Q^{t+1})$.
\label{thmcomparison}
\end{thm}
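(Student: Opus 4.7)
The approach is to realize the Cartan-Eilenberg filtration geometrically using the $BP$-Adams tower, and to compare $d_2^H$ with $d_1^{AN}$ via a single application of Lemma \ref{lemma-may} to a $3\times 3$ diagram obtained by smashing the $BP$- and $H$-Adams resolutions. Write $\bar H=\mathrm{fib}(\STop\to H)$, $\bar{BP}=\mathrm{fib}(\STop\to BP)$, and let $B_s=\bar{BP}^{\wedge s}$, so that there are cofiber sequences
\[
B_{s+1}\longrightarrow B_s\longrightarrow BP\wedge B_s\,.
\]
Running the $H$-Adams spectral sequence of $B_s$ for varying $s$ induces a filtration on $E_2^{*,*}(\STop;H)$ which I would identify with the Cartan-Eilenberg filtration: $F^s_{\text{CE}}$ consists of classes lifting to the $H$-Adams $E_2$-term on $B_s$, and the associated graded in filtration $s$ is canonically $H^{*,*}(P;Q^s)$, via the comodule identification $H_*(BP)=A\,\Box_E\,\F_2$ applied iteratively to $H_*(BP\wedge B_s)$.

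Given $x\in F^s_{\text{CE}}E_2^{s+t,u+t}(\STop;H)$ detected by $a\in H^{s,u}(P;Q^t)$, the identification above furnishes a geometric representative $\tilde a$ of $a$ in the $H$-Adams $E_2$-term of $BP\wedge B_s$ at Adams filtration $t$, together with a lift of $x$ through $B_s$. The comparison diagram is built by smashing the Novikov cofiber sequence $B_{s+1}\to B_s\to BP\wedge B_s$ with the $H$-Adams cofiber sequence needed to witness $d_2^H$ (built from $\bar H\to\STop\to H$ and its one-fold extension). An Adams $d_2$-chase on $\tilde a$, interpreted in one corner of the resulting $3\times 3$ diagram of cofiber sequences, produces on one edge a chain representative of $d_2^H x$ in the $H$-Adams spectral sequence for $B_s$, and on the perpendicular edge the class obtained by pushing $\tilde a$ across the Novikov connecting map $BP\wedge B_s\to\Sigma B_{s+1}$, which is precisely the geometric avatar of $d_1^{AN}a$. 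Lemma \ref{lemma-may} produces a common class $c$ in the $\pi_*$ of the $(s{+}1,t{+}1)$-corner realizing both simultaneously; because $c$ lives on $B_{s+1}$, its existence shows both that $d_2^H x\in F^{s+1}_{\text{CE}}$ and that it represents $d_1^{AN}a$ under the isomorphism $\mathrm{gr}^{s+1}_{\text{CE}}\cong H^{s+1,u}(P;Q^{t+1})$.

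The main obstacle is the precise identification of the algebraic Novikov differential with the geometric boundary produced by Lemma \ref{lemma-may}: one must check that applying $H_*(-;\F_2)$ to the Novikov connecting map $BP\wedge B_s\to\Sigma B_{s+1}$ recovers the algebraic $d_1^{AN}$ acting on the cobar complex for $(P;Q^t)$, rather than a shifted or twisted version thereof, and similarly that the $H$-Adams edge of the diagram produces the literal $d_2^H$. This is exactly the step at which Lemma $6.7$ of \cite{miller1981relations} fell short, and the $3\times 3$ compatibility formulated by May is what cleanly enforces the required filtration- and sign-bookkeeping so that the two edges of the diagram align as claimed.
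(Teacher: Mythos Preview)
Your strategy is the paper's strategy: realize the Cartan--Eilenberg filtration geometrically via the canonical $BP$-resolution, smash with the $H$-resolution, and invoke May's $3\times3$ lemma to reconcile the two boundary maps. So the architecture is right. But the proposal elides the two technical points that carry all the weight, and which the paper isolates as precisely the places where the original argument in \cite{miller1981relations} was incomplete.

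First, to apply Lemma~\ref{lemma-may} you need the middle element $z'\in\pi_{u+t}(H^{[t]}\wedge\oBP^{\wedge s})$ to die in the far corner $\pi_{u+t}(\oH^{\wedge(t+1)}\wedge BP^{[s]})$. This is not automatic: you only know that $d_1^Hz'=j_Hk_Hz'$ vanishes after applying $j_{BP}$, and the paper extracts the needed vanishing from the collapse of the $H$-Adams spectral sequence for $BP^{[s]}$. Your sketch never invokes this collapse, so the hypothesis of May's lemma is unverified.

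Second, and more seriously, May's lemma does \emph{not} by itself establish that $d_2^Hx$ lands in $F^{s+1}_{\text{CE}}$. The lemma hands you an element $y_0\in\pi_*(\oH^{\wedge(t+1)}\wedge\oBP^{\wedge(s+1)})$ with $i_{BP}y_0=k_Hz'$, but to identify $\delta y_0$ with a legitimate $d_2^H$-witness you need $i_H\delta=-i_{BP}$ as maps $\oH^{\wedge(t+1)}\wedge\oBP^{\wedge(s+1)}\to\oH^{\wedge(t+1)}\wedge\oBP^{\wedge s}$. The paper checks this via a sign chase that reduces to the vanishing of $H^1(\oH\wedge\oBP)$; without it the two ``boundary maps'' you compare could differ by a nontrivial term. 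Your last paragraph gestures at ``sign-bookkeeping'' being handled by May, but this particular compatibility lies outside the $3\times3$ lemma and has to be proved separately. Once you supply these two ingredients, your outline becomes the paper's proof.
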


\begin{proof}
The proof depends upon geometric constructions of the two algebraically defined spectral sequences. Both arise from the canonical $BP$-resolution of $\STop$ and so we recall how this resolution is constructed. From the unit map of the ring spectrum $BP$ we can construct a cofiber sequence
\begin{align}\label{unitBP}
\STop\to BP\to\overline{BP}.
\end{align}
Smashing this cofiber sequence with various smash-powers of $\overline{BP}$ gives the canonical $BP$-resolution of $\STop$. 
\be
\begin{aligned}
\xymatrixcolsep{40pt}\xymatrix@L=-4pt{
\STop\ar[d]& \overline{BP}\ar[d]\ar[l]_-{|}& 
\cdots\ar[l]_-{|}& 
\overline{BP}{}^{\wedge s}\ar[d]\ar[l]_-{|}& 
\overline{BP}{}^{\wedge (s+1)}\ar[d]\ar[l]_-{|}& 
\cdots\ar[l]_-{|}\\
BP^{[0]}& BP^{[1]}&& BP^{[s]}& BP^{[s+1]}
}
\label{canonicalBP}
\end{aligned}
\ee
Here we use the notation
\[
BP^{[s]}=\overline{BP}{}^{\wedge s}\wedge BP,
\]
and the marked arrows indicate that they map from a desuspension. 
The Adams-Novikov spectral sequence for the homotopy of a spectrum $X$ is 
associated
to the exact couple arising by smashing this resolution with $X$ and taking
homotopy groups. The maps in the exact couple will be denoted
\be
\begin{aligned}
i_{BP}:\pi_{u+s}(X\wedge\oBP^{\wedge(s+1)})\to &
\pi_{u+s-1}(X\wedge\oBP^{\wedge s}) \\
j_{BP}:\pi_{u+s}(X\wedge\oBP^{\wedge s})\to & 
\pi_{u+s}(X\wedge BP^{[s]}) \\
k_{BP}:\pi_{u+s}(X\wedge BP^{[s]})\to &
\pi_{u+s}(X\wedge\oBP^{\wedge(s+1)})
\label{ec}
\end{aligned}
\ee

The $E_1$ term arising from this exact couple is isomorphic, as a complex, 
to the cobar complex $\Omega^*(BP_*BP)$.
The algebraic Novikov spectral sequence arises by filtering this complex
by powers of the augmentation ideal in $BP_*$. In geometric terms, we are
filtering $\pi_*(\oBP^{\wedge s})$ by the classical Adams filtration. 

Next we set up the Cartan-Eilenberg spectral sequence. Smashing \eqref{unitBP} with a spectrum $X$ and applying mod $2$ homology gives a short exact sequence
\[0\to H_*(X)\to H_*(BP\wedge X)\to H_*(\overline{BP}\wedge X)\to 0\]
(by the K\"unneth formula) and thus a long exact sequence
\be
\cdots\to E_2^{t,u}(X;H)\to E_2^{t,u}(X\wedge BP;H)\to 
E_2^{t,u}(X\wedge\overline{BP};H)
\overset{\delta}{\to} E_2^{t+1,u}(X;H)\to\cdots.
\label{e2les}
\ee
This means that applying $E_2(-;H)$ to \eqref{canonicalBP} gives an exact couple and hence a spectral sequence. We index the spectral sequence so that
\[E_1^{s,t,u}=E_2^{t,u+t}(BP^{[s]};H)
\overset{s}{\implies} E_2^{s+t,u+t}(\STop;H).\]
A change of rings theorem identifies $E_2^{t,u+t}(BP^{[s]};H)$ with $(Q^t\otimes\overline{P}^{\otimes s})_u$, so that our $E_1$-term is isomorphic, as a complex, to $\Omega^*(P;Q)$. Thus $E_2^{s,t,u}=H^{s,u}(P;Q^t)$. This spectral sequence is, in fact, the Cartan-Eilenberg spectral sequence of \eqref{CESS}. The Cartan-Eilenberg filtration of $E_2(\STop;H)$ is given by
\begin{align}\label{CEfilt}
F^s_{\text{CE}}E_2^{s+t,u+t}(\STop;H)=\im\Big(
\delta^s:
E_2^{t,u+t}(\overline{BP}{}^{\wedge s};H)\to 
E_2^{s+t,u+t}(\STop;H)
\Big).
\end{align}
If $x=\delta^sz$ then $x$ is detected in the Cartan-Eilenberg $E_1$-page by the image of $z$ under the map 
$j_{BP}:E_2^{t,u+t}(\oBP^{\wedge s};H)\to E_2^{t,u+t}(BP^{[s]};H)$.

We next recall a construction of the Adams spectral sequence for a spectrum $X$. We have a cofiber sequence $\STop\to H\to\overline{H}$ and we define
\[
H^{[t]}=H\wedge\overline{H}{}^{\wedge t}.
\]
Note that we put the factor of $H$ at the left rather than at the right
as in the case of $BP$; this will let us keep these two types of resolution
separate. The cofiber sequences 
\[\xymatrixcolsep{40pt}\xymatrix@L=-4pt{
\overline{H}{}^{\wedge (t+1)}\wedge X\ar[r]_-{|}& 
\overline{H}{}^{\wedge t}\wedge X\ar[r]& 
H^{[t]}\wedge X}\]
link together as in \eqref{canonicalBP} and the Adams spectral sequence for $X$ is obtained by applying $\pi_*(-)$. We have $E_1^{t,u}=\pi_u(H^{[t]}\wedge X)$ and $E_2^{t,u}=H^{t,u}(A;H_*(X))$. 
The structure maps in the exact couple will be denoted as in \eqref{ec} but 
subscripted with $H$.

The map of ring spectra $BP\to H$ descends uninque to a map 
\[
\delta:\overline{BP}\to\overline{H}
\]
such that $i_H\delta=i_{BP}$.
We will denote this map and all the maps it induces by $\delta$, even if they
involve the swap map $T$. For example, we have compatible maps 
\[
\xymatrix@R=-3pc{
\delta:\oH^{\wedge t}\wedge X\wedge\oBP \ar[r]^{1\wedge T} &
\oH^{\wedge t}\wedge\oBP\wedge X \ar[r]^{1\wedge\delta\wedge1} &
\oH^{\wedge(t+1)}\wedge X \\
\delta:H^{[t]}\wedge X\wedge\oBP \ar[r]^{1\wedge T} & 
H^{[t]}\wedge\oBP\wedge X \ar[r]^{1\wedge\delta\wedge1} & 
H^{[t+1]}\wedge X
}
\]
for any spectrum $X$, and so a map of spectral sequences 
\[
\delta:E_r^{t,u}(X\wedge\oBP;H)\to E_r^{t+1,u}(X;H)\,.
\]
In particular, with
$X=\oBP^{\wedge s}$, we have compatible maps
\begin{align*}
\delta:\oH^{\wedge t}\wedge\oBP^{\wedge(s+1)}\to &
\oH^{\wedge(t+1)}\wedge\oBP^{\wedge s}\\
\delta:H^{[t]}\wedge\oBP^{\wedge(s+1)}\to &
\oH^{[t+1]}\wedge\oBP^{\wedge s}\,. 
\end{align*}
At $E_2$, this map is the boundary map $\delta$ in \eqref{e2les}. 

We now address the first claim of the theorem. Suppose that 
$x\in F^s_{CE}E_2^{s+t,u+t}(\STop;H)$, so that $x=\delta^sz$ for some $z\in E_2^{t,u+t}(\overline{BP}{}^{\wedge s};H)$. Since $\delta$ is a map of spectral sequences, $d_2^{H}x=\delta^s(d_2^{H}z)$. Thus in order to show that 
\[
d_2^{H}x\in F^{s+1}_{CE}E_2^{s+t+2,u+t+1}(\STop;H)
\]
it suffices to find an element mapping to $d_2^{H}z$ under
\[
\delta:E_2^{t+1,u+t+1}(\overline{BP}{}^{\wedge (s+1)};H)\to 
E_2^{t+2,u+t+1}(\overline{BP}{}^{\wedge s};H)\,.
\]

To compute $d_2^{H}z$, begin by picking a representative 
\be
\label{zprime}
z'\in\pi_{u+t}(H^{[t]}\wedge\overline{BP}{}^{\wedge s})
\ee
for $z$.
Since $0=d^H_1z'=j_Hk_Hz'$, $k_Hz'=i_Hy'$ for some 
$y'\in\pi_{u+t+1}(\oH^{\wedge(t+2)}\wedge\oBP^{\wedge s})$. Then 
$j_Hy'\in\pi_{u+t+1}(H^{[t+2]}\wedge\oBP^{\wedge s})$ is a cocycle
representing $d^H_2z$. These maps arise by applying $\pi_*$ to the bottom 
row in the following diagram.
\be
\xymatrix{
& & 
\oH^{\wedge(t+1)}\wedge\oBP^{\wedge(s+1)} 
\ar@{..>}[dl]_{-i_{BP}} \ar[r]^{j_H}\ar[d]^\delta &
H^{[t+1]}\wedge\oBP^{\wedge(s+1)} \ar[d]^{\delta}\\
H^{[t]}\wedge\oBP^{\wedge s} \ar[r]^{k_H} & 
\oH^{\wedge(t+1)}\wedge\oBP^{\wedge s} & 
\oH^{\wedge(t+2)}\wedge\oBP^{\wedge s} \ar[l]_{i_H} \ar[r]^{j_H} &
H^{[t+2]}\wedge\oBP^{\wedge s} 
}
\label{adamsd2}
\ee
We will construct an element 
$y_0\in\pi_{u+t+1}(\oH^{\wedge(t+1)}\wedge\oBP^{\wedge(s+1)})$ 
such that
$i_H\delta y_0=k_Hz'$. We can then take $y'=\delta y_0$,
so the representative $j_Hy'$ for $d_2^Hx$ lifts across $\delta$, to the 
cocyle $j_Hy_0$. This proves the first claim of the theorem.

To construct $y_0$ we first show that $k_Hz'$ lifts across 
$i_{BP}$, and then that
the dotted triangle commutes: $i_H\delta=-i_{BP}$.

The first step is organized by the diagram
\[
\xymatrix{
& H^{[t]}\wedge \oBP^{\wedge s} \ar[r]^{j_{BP}} \ar[d]^{k_H} &
H^{[t]}\wedge BP^{[s]} \ar[d]^{k_H} \ar@/^5em/[dd]^{d^H_1} \\
\oH^{\wedge(t+1)}\wedge\oBP^{\wedge(s+1)} \ar[r]^{i_{BP}} & 
\oH^{\wedge(t+1)}\wedge\oBP^{\wedge s} \ar[r]^{j_{BP}} \ar[d]^{j_H} &
\oH^{\wedge(t+1)}\wedge BP^{[s]} \ar[d]^{j_H} \\
& H^{[t+1]}\wedge\oBP^{\wedge s} \ar[r]^{j_{BP}} &
H^{[t+1]}\wedge BP^{[s]}
}
\]
We have $d^H_1j_{BP}z'=j_{BP}d^H_1z'=0$.
Since the Adams spectral sequence for $BP^{[s]}$ collapses at $E_2$,
this implies that $k_Hj_{BP}z'=0$, so $j_{BP}k_Hz'=0$ and hence
that $k_Hz'$ lifts to some 
$y_0\in\pi_{u+t+1}(\oH^{\wedge(t+1)}\wedge\oBP^{\wedge(s+1)})$. 
It is important to note that the only property of $y_0$ used in this
part of the proof is that $i_{BP}y_0=k_Hz'$. 

Since $i_H\delta=i_{BP}:\oBP\to S^1$, it may appear that the triangle 
commutes without the sign. But $\delta$ has many meanings. 
The triangle expands to the perimeter of the diagram
\[
\xymatrix{
\oH\wedge Y\wedge X\wedge\oBP \ar[rr]^{1\wedge i_{BP}} \ar[d]^{(34)} &&
\oH\wedge Y\wedge X\wedge S^{-1} \ar[d]^{(34)} \ar@/^5em/[dd]^{(134)} \\
\oH\wedge Y\wedge\oBP\wedge X \ar[d]^{1\wedge\delta\wedge1} 
\ar[rr]^{1\wedge i_{BP}\wedge1} \ar@{}[drr]|{-} &&
\oH\wedge Y\wedge S^{-1}\wedge X \ar[d]^{(13)} \\
\oH\wedge Y\wedge\oH\wedge X \ar[rr]^{i_H\wedge1} && 
S^{-1}\wedge Y\wedge\oH\wedge X
}
\]
in which $X=\oBP^{\wedge s}$ and $Y=\oH^{\wedge t}$, and the cycles indicate
the appropriate permutations of factors. The top inner square commutes, and
it remains to check that the bottom square commutes up to the indicated sign.

For this, note that we can drop the terminal $X$. We can also move the $Y$ 
to the right end and drop it; so we need to check that 
\be
\xymatrix{
\oH\wedge\oBP \ar[rr]^{1\wedge i_{BP}} \ar[d]^{1\wedge\delta} \ar@{}[drr]|{-}
&& \oH\wedge S^{1} \ar[d]^{(12)} \\
\oH\wedge\oH \ar[rr]^{i_H\wedge1} && S^{1}\wedge\oH 
}
\label{sum-maps}
\ee
commutes up to sign. To verify this, first map out to $S^{1}\wedge S^{1}$:
\[
\xymatrix{
\oH\wedge\oBP \ar[rr]^{1\wedge i_{BP}} \ar[d]^{1\wedge\delta} \ar@{}[drr]|{-} 
&& \oH\wedge S^{1} \ar[d]^{(12)} \ar[dr]^{i_H\wedge1} \\
\oH\wedge\oH \ar[rr]^{i_H\wedge1} \ar[drrr]_{i_H\wedge i_H} && 
S^{1}\wedge\oH \ar[dr]^{1\wedge i_H} & S^{1}\wedge S^{1} \ar[d]^{(12)} \\
&&& S^{1}\wedge S^{1} 
}
\]
Now $i_H\delta=i_{BP}$, and
the switch map on $S^{1}\wedge S^{1}$ multiplies by $-1$, so the outer
diagram commutes up to sign. Thus the sum of the two maps across 
\eqref{sum-maps} lifts through a map $\oH\wedge\oBP\to S^{1}\wedge H$. 
But $H^1(\oH\wedge\oBP)=0$.

This completes the proof of the first part of the theorem. To relate the
Adams differential to the algebraic Novikov differential, recall from
\eqref{CEfilt} and \eqref{zprime} 
that in the algebraic Novikov $E_1$ term 
\begin{quote}
 $x\in F_{CE}^sE_2^{s+t,u+t}(\STop;H)$ is represented 
the class of $j_{BP}z'\in\pi_{u+t}(\oH^{[t]}\wedge\oBP^{[s]})$.
\end{quote}
Similarly, from \eqref{adamsd2},
\begin{quote}$d^A_2x\in F_{CE}^{s+1}E_2^{s+t+2,u+t+1}(\STop;H)$ is 
represented by $j_{BP}j_Hy_0\in\pi_{u+t+1}(H^{[t+1]}\wedge BP^{[s+1]})$.
\end{quote}
We need to see that these two classes are also related by $d_1^{AN}$. 

The computation of the differential $d^{AN}_1$ is organized by the top of the
following diagram. 
\[
\xymatrix@C-3em{
H^{[t]}\wedge BP^{[s]} \ar[rr]^{d^{BP}_1} &&
H^{[t]}\wedge BP^{[s+1]} &&
H^{[t+1]}\wedge BP^{[s+1]} \ar[ll]_{i_H} \\
\oH^{\wedge t}\wedge BP^{[s]} 
\ar[u]^{j_H} \ar[rr]^{d^{BP}_1} \ar[dr]_{k_{BP}} &&
\oH^{\wedge t}\wedge BP^{[s+1]} \ar[u]^{j_H} &&
\oH^{\wedge(t+1)}\wedge BP^{[s+1]} \ar[u]^{j_H} \ar[ll]_{i_H} \\
& \oH^{\wedge t}\wedge\oBP^{\wedge(s+1)} \ar[ur]_{j_{BP}} &&
\oH^{\wedge(t+1)}\wedge\oBP^{\wedge(s+1)} \ar[ll]_{i_H} \ar[ur]_{j_{BP}}
}
\]
Let $a\in H^{t,u}(P;Q^s)$ be an element in the algebraic Novikov $E_1$ term.
To compute $d^{AN}_1a$, find a representative 
$a'\in\pi_{u+t}(H^{[t]}\wedge BP^{[s]})$. By the collapse at $E_2$ of the 
Adams spectral sequence for $BP^{[s]}$, $a'$ lifts to an element 
$y_1\in\pi_{t+u}(\oH^{\wedge t}\wedge BP^{[s]})$. Since $a'$ is a cocycle,
$d^{BP}_1y_1=i_Hb\in\pi_{u+t}(\oH^{\wedge t}\wedge BP^{[s+1]})$ for
some $b\in\pi_{u+t+1}(\oH^{\wedge(t+1)}\wedge BP^{[s+1]})$. 
Then $j_Hb$ is a representative for $d^{AN}_1a$.

With $a'=j_{BP}z'$, our earlier work suggests that we might choose
$b=j_{BP}y_0$; then $j_Hb=j_Hj_{BP}y_0=j_{BP}j_Hy_0$ would be our 
representative for $d^{AN}_1a$, reaching the conclusion we want.
For this to work, we need $j_{BP}y_0$ to satisfy the equation required of $b$;
that is, $i_Hj_{BP}y_0=d^{BP}_1y_1=j_{BP}k_{BP}y_1$.
Since $i_Hj_{BP}=j_{BP}i_H$, this will be guaranteed if 
\[
i_Hy_0=k_{BP}y_1\in\pi_{u+t}(\oH^{\wedge t}\wedge\oBP^{\wedge(s+1)})\,.
\]
But this is precisely what is guaranteed to us by the application
of May's lemma \ref{lemma-may} to the following diagram. 
\[
\xymatrix{
\oH^{\wedge t}\wedge\oBP^{\wedge(s+1)} \ar[r] \ar[d] &
\oH^{\wedge t}\wedge\oBP^{\wedge s} \ar[r] \ar[d] &
\oH^{\wedge t}\wedge BP^{[s]} \ar[d]^{j_H} \ar@/_3em/[ll]_{k_{BP}} \\
H^{[t]}\wedge\oBP^{\wedge(s+1)} \ar[d] \ar[r] &
H^{[t]}\wedge\oBP^{\wedge s} \ar[r]^{j_{BP}} \ar[d]^{k_H} &
H^{[t]}\wedge BP^{[s]} \ar[d]^{k_H} \\
\oH^{\wedge(t+1)}\wedge\oBP^{\wedge(s+1)} 
\ar[r]^{i_{BP}} \ar@/^5em/[uu]^{i_H} &
\oH^{\wedge(t+1)}\wedge\oBP^{\wedge s} \ar[r]^{j_{BP}} &
\oH^{\wedge(t+1)}\wedge BP^{[s]} 
}
\]
This concludes the proof. 
\end{proof}

\subsection{Motivic conclusions.}

This proof works in much greater generality. As in \cite{miller1981relations},
the square of spectral sequences can be set up for any map of ring spectra
$A\to B$ and any spectrum $X$ for which the $B$-Adams spectral 
sequence 
\[
E_2(A\wedge\overline{A}{}^{\wedge s}\wedge X;B)\implies
\pi_*(A\wedge\overline{A}{}^{\wedge s}\wedge X)
\]
converges and collapses at the $E_2$-page for all $s$. The proof holds whenever $B^1(\overline{B}\wedge\overline{A})=0$.

In particular, the proof works in the motivic context, at least over $\CC$, 
with $H$ replaced by the mod 2 motivic Eilenberg Mac Lane spectrum (also 
denoted $H$)
and $BP$ by the spectrum $BPM$ considered by Hu, Kriz, and Ormsby 
\cite{hu2011remarks}. They observe that as a comodule for $A_\Mot$
\[
H_*(BPM)=\A_\Mot\square_E\V\,,
\]
where $A_\Mot$ is the motivic dual Steenrod algebra
\[
A_{\Mot}=V[\tau_0,\tau_1,\ldots,\xi_1,\xi_2,\ldots]/(\tau_n^2=\tau\xi_{n+1})\,,
\]
\[
|\tau_n|=(2^{n+1}-1,2^n-1),\ |\xi_n|=(2^{n+1}-2,2^n-1)\,,
\]
and $E$ is the exterior algebra over $\V$ generated by $\tau_0,\tau_1\ldots$
By change of rings, then, 
\[
E_2(BPM;H)=\F_2[\tau,v_0,v_1,\ldots]
\]
where $v_i$ is the class of the primitive $\tau_i$.  
The motivic Adams spectral sequence for $BPM$ thus collapses. 

We also need to check that $H^{1,0}(\overline{BPM}\wedge\oH)=0$. This follows
from the fact that $BPM$ and $H$ are cellular (\!\!\cite{hu2011remarks}), 
using the computations of $H_*(BPM)$ and $H_*(H)$ and the K\"unneth theorem
\cite{dugger2007motivic}.

We have
\[
h_0\in E_2^{1,2,1}(\SMot;H)\,,\quad 
v_n\in E_2^{1,2^{n+1}-1,2^n-1}(\SMot;H)\,,
\]
and the differentials
constructed in Proposition \ref{localg.nss} produce the following differentials in the motivic Adams spectral sequence:
\[d_2v_{n+1}\equiv v_n^2h_0,\ n\geq 2\]
modulo terms of higher Cartan-Eilenberg filtration. 
Recalling that $\deg=t-2w=1$ we see $\deg(h_0)=0$ and $\deg(v_n)=1$. Thus $v_n^2h_0$ has Chow degree $2$ and Adams filtration $3$ and any other such element must be of the form $v_iv_jh_0$. We find that 
\[v_n^2h_0\in E_2^{3,2^{n+2},2^{n+1}-1}(\SMot;H)\]
is the only element in its trigrading so, in fact, our calculation has no indeterminancy. This is our last theorem.

\begin{thm} 
In the $\eta$-localized motivic Adams spectral sequence,
\[
d_2v_{n+1}=v_n^2h_0\,,\quad n\geq 2\,.
\]
\end{thm}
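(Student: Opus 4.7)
The plan is to deduce the theorem by combining three already-established ingredients: the localized algebraic Novikov differential $d_1 q_{n+1} = q_n^2 h_0$ from Proposition \ref{localg.nss}, the comparison theorem \ref{thmcomparison} (in its motivic form), and a Chow-degree argument to pin down the answer exactly rather than merely modulo filtration.

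First, I would record that by the construction preceding the theorem, the generator $v_{n+1}$ in the localized motivic Adams $E_2$-page is chosen as a representative of $q_{n+1}$ in the Cartan--Eilenberg $E_\infty$-page. Consequently $v_{n+1}$ lies in Cartan--Eilenberg filtration zero and is detected in the algebraic Novikov $E_1$-term by $q_{n+1}$ itself.

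Next I would invoke the motivic generalization of Theorem \ref{thmcomparison}, using the fact that its proof carries over once one verifies (i) collapse of the $H$-based motivic Adams spectral sequence for $BPM$ (immediate from $E_2(BPM;H) = \F_2[\tau, v_0, v_1, \ldots]$ concentrated in Cartan--Eilenberg filtration zero), and (ii) the vanishing $H^{1,0}(\overline{BPM} \wedge \overline{H}) = 0$ (which follows from cellularity of $BPM$ and $H$ over $\C$, the computations of $H_*(BPM)$ and $H_*(H)$, and the motivic K\"unneth theorem). Applied to $x = v_{n+1}$ with algebraic Novikov representative $q_{n+1}$, the theorem gives
\[
d_2 v_{n+1} \equiv v_n^2 h_0 \pmod{F_{\mathrm{CE}}^{s+1}}\,,
\]
matching the algebraic Novikov differential.

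Finally, to remove the ``modulo higher Cartan--Eilenberg filtration'' qualifier, I would argue by trigrading. Using the Chow degree $\deg = \dim - 2\,\mathrm{wt}$, one computes $\deg h_0 = 0$ and $\deg v_n = 1$, so $v_n^2 h_0$ has Chow degree $2$ and Adams filtration $3$. Inspecting the explicit polynomial algebra
\[
h_0^{-1} E_2(\SMot;H) = \F_2[h_0^{\pm 1}, v_1^4, v_2, v_3, \ldots]\,,
\]
any element of Adams filtration $3$ and Chow degree $2$ must be a monomial $v_i v_j h_0$, and after matching internal bidegree and weight with the target $E_2^{3,2^{n+2},2^{n+1}-1}(\SMot;H)$ of $d_2 v_{n+1}$, only $v_n^2 h_0$ remains. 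Hence the trigrading is so sparse here that the entire ``modulo higher filtration'' ambiguity is empty, and the differential is exactly $v_n^2 h_0$. The main obstacle is the motivic adaptation in the second step, where one must carefully check that the geometric $3 \times 3$ diagram chase used in the proof of Theorem \ref{thmcomparison} goes through verbatim with $BP$ replaced by $BPM$ and the sign lemma involving $H^1(\overline{H}\wedge\overline{BP})$ still applies; the other two steps are essentially bookkeeping.
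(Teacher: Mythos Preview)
Your proposal is correct and follows essentially the same route as the paper: apply the motivic version of Theorem~\ref{thmcomparison} (after verifying collapse for $BPM$ and the vanishing $H^{1,0}(\overline{BPM}\wedge\overline{H})=0$) to transport the localized algebraic Novikov differential $d_1q_{n+1}=q_n^2h_0$ into $d_2v_{n+1}\equiv v_n^2h_0$ modulo higher Cartan--Eilenberg filtration, and then use the Chow-degree/trigrading count to see that $v_n^2h_0$ is alone in its tridegree. The paper's argument is identical in structure and in the specific checks performed.
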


\bibliographystyle{amsplain}

\bibliography{andrews-miller.bib}{}

\Addresses

\end{document}